\renewcommand{\mathcal}{\mathscr}
\def\setminus{\mathchoice
  {\mathbin{\vrule height .72ex width 1.61ex depth -.38ex}}
  {\mathbin{\vrule height .72ex width 1.61ex depth -.38ex}}
  {\mathbin{\vrule height .50ex width 0.85ex depth -.28ex}}
  {\mathbin{\vrule height .20ex width 0.570ex depth -.24ex}}
}
\DeclareMathSymbol{A}{\mathalpha}{operators}{`A}%
\DeclareMathSymbol{B}{\mathalpha}{operators}{`B}%
\DeclareMathSymbol{C}{\mathalpha}{operators}{`C}%
\DeclareMathSymbol{D}{\mathalpha}{operators}{`D}%
\DeclareMathSymbol{E}{\mathalpha}{operators}{`E}%
\DeclareMathSymbol{F}{\mathalpha}{operators}{`F}%
\DeclareMathSymbol{G}{\mathalpha}{operators}{`G}%
\DeclareMathSymbol{H}{\mathalpha}{operators}{`H}%
\DeclareMathSymbol{I}{\mathalpha}{operators}{`I}%
\DeclareMathSymbol{J}{\mathalpha}{operators}{`J}%
\DeclareMathSymbol{K}{\mathalpha}{operators}{`K}%
\DeclareMathSymbol{L}{\mathalpha}{operators}{`L}%
\DeclareMathSymbol{M}{\mathalpha}{operators}{`M}%
\DeclareMathSymbol{N}{\mathalpha}{operators}{`N}%
\DeclareMathSymbol{O}{\mathalpha}{operators}{`O}%
\DeclareMathSymbol{P}{\mathalpha}{operators}{`P}%
\DeclareMathSymbol{Q}{\mathalpha}{operators}{`Q}%
\DeclareMathSymbol{R}{\mathalpha}{operators}{`R}%
\DeclareMathSymbol{S}{\mathalpha}{operators}{`S}%
\DeclareMathSymbol{T}{\mathalpha}{operators}{`T}%
\DeclareMathSymbol{U}{\mathalpha}{operators}{`U}%
\DeclareMathSymbol{V}{\mathalpha}{operators}{`V}%
\DeclareMathSymbol{W}{\mathalpha}{operators}{`W}%
\DeclareMathSymbol{X}{\mathalpha}{operators}{`X}%
\DeclareMathSymbol{Y}{\mathalpha}{operators}{`Y}%
\DeclareMathSymbol{Z}{\mathalpha}{operators}{`Z}%
\renewcommand{\leq}{\leqslant}
\renewcommand{\geq}{\geqslant}
\renewcommand{\le}{\leqslant}
\renewcommand{\ge}{\geqslant}
\renewcommand{\phi}{\varphi}
\newcommand{\Cc}{\mathbf{C}}
\newcommand{\Aa}{\mathbf{A}}
\newcommand{\Zz}{\mathbf{Z}}
\newcommand{\Gg}{\mathbf{G}}
\newcommand{\Qq}{\mathbf{Q}}
\newcommand{\bFp}{\bar{\mathbf{F}}_p}
\newcommand{\bQl}{\bar{\mathbf{Q}}_{\ell}}
\newcommand{\Ff}{\mathbf{F}}
\newcommand{\mods}[1]{\,(\mathrm{mod}\,{#1})}
\DeclareMathOperator{\hypk}{Kl}
\newcommand{\lra}{\longrightarrow}
\newcommand{\fleche}[1]{\stackrel{#1}{\lra}}
\DeclareMathOperator{\spec}{Spec}
\DeclareMathOperator{\Gal}{Gal}
\DeclareMathOperator{\Tr}{tr}
\DeclareMathOperator{\End}{End}
\DeclareMathOperator{\Aut}{Aut}
\newcommand{\eps}{\varepsilon}
\renewcommand{\rho}{\varrho}
\DeclareMathOperator{\SL}{SL}
\DeclareMathOperator{\GL}{GL}
\DeclareMathOperator{\Sp}{Sp}
\DeclareMathOperator{\SO}{SO}
\DeclareMathOperator{\SU}{SU}
\DeclareMathOperator{\Un}{U}
\DeclareMathOperator{\USp}{USp}
\DeclareMathSymbol{\gena}{\mathord}{letters}{"3C}
\DeclareMathSymbol{\genb}{\mathord}{letters}{"3E}
\theoremstyle{plain}
\newtheorem{theorem}{Theorem}[section]
\newtheorem{lemma}[theorem]{Lemma}
\newtheorem{corollary}[theorem]{Corollary}
\newtheorem{proposition}[theorem]{Proposition}
\theoremstyle{remark}
\theoremstyle{definition}
\newtheorem{definition}[theorem]{Definition}
\newtheorem{remark}[theorem]{Remark}
\newcommand{\mcL}{\mathcal{L}}
\newcommand{\mcF}{\mathcal{F}}
\newcommand{\mctF}{\widetilde{\mathcal{F}}}
\renewcommand{\geq}{\geqslant}
\renewcommand{\leq}{\leqslant}
\begin{document}

\title{Exponential sums, twisted multiplicativity and moments}

\author{E. Kowalski}
\address{ETH Z\"urich -- D-MATH\\
  R\"amistrasse 101\\
  8092 Z\"urich\\
  Switzerland} 
\email{kowalski@math.ethz.ch}

\author{K. Soundararajan}
\address{Department of Mathematics, Stanford University, Stanford, CA 94305}
\email{ksound@stanford.edu}

\date{\today,\ \thistime} 

\begin{abstract}
  We study averages over squarefree moduli of the size of
  exponential sums with polynomial phases. We prove upper bounds on
  various moments of such sums, and obtain evidence of un-correlation of exponential sums associated to 
  different suitably unrelated and generic polynomials. The proofs combine
  analytic arguments with the algebraic interpretation of exponential
  sums and their monodromy groups.
\end{abstract}

\maketitle

\begin{flushright}
  \textit{Dedicated to the memory of Jean Bourgain}
\end{flushright}

\bigskip
\bigskip

\section{Introduction}

Some of Jean Bourgain's many interactions with number theory involved
exponential sums in different ways. Among these, one can mention his
ground-breaking use of ideas from the circle method to solve Bellow's
problems concerning pointwise ergodic theorems at times $f(n)$, where
$f\in \Zz[X]$ is a polynomial (see in particular~\cite{bourgain3,
  bourgain1, bourgain2}) or its combination with bilinear forms in joint
works with A. Kontorovich to study some aspects of the sieve in orbits
beyond a simple appeal to expansion and spectral gaps (see for
instance~\cite{bk}).
We respectfully dedicate this paper to his memory.

\subsection{Exponential sums with polynomials}

This paper is primarily concerned with exponential sums with polynomial
phases.  Let~$f\in\Zz[X]$ be a non-constant polynomial with degree
$d$. For $q\geq 1$ squarefree and~$a$ coprime to~$q$, we define
$$
W(a;q) =
W_f(a;q)=\frac{1}{\sqrt{q}}\sum_{x\mods{q}}e\Bigl(\frac{af(x)}{q}\Bigr),
$$
where the sum is over residue classes modulo~$q$.  For simplicity we restrict attention to square-free $q$, and set 
$W(a;q)=0$ if $q$ is not
square-free or if $(a,q)>1$.   

An application of the Chinese Remainder Theorem shows that the
exponential sums $W(a;q)$ satisfy the following ``twisted
multiplicativity'': if $(q_1, q_2)=1$ then
$$ 
W(a;q_1 q_2)  = W(a\bar{q_1};q_2) W(a\bar{q_2};q_1), 
$$ 
where $q_1 \bar{q_1} \equiv 1 \mod{q_2}$ and
$q_2 \bar{q_2} \equiv 1 \mod{q_1}$.  Apart from finitely many primes,
the Weil bound gives $|W(a;p)| \le (d-1)$, so that
$|W(a;q)| \ll (d-1)^{\omega(q)}$ where $\omega(q)$ denotes the number of
(distinct) prime factors of $q$.  It follows that
$$ 
\sum_{q\le  x} |W(a;q)| \ll \sum_{q\le x} (d-1)^{\omega(q)} \ll x (\log x)^{d-2},    
$$ 
and we seek an improvement over this ``trivial'' bound, as well as
bounds for related mean values such as $\sum_{q\le x} |W(a;q)|^2$.  The
possibility of obtaining such improvements was first recognized by
Hooley, and explored further in the work of Fouvry and Michel~\cite{fm}.
\par

One of our main theorems gives a refinement of these earlier results.
Given a field $K$, we say that a polynomial $f\in K[X]$ is
\emph{decomposable} if there are polynomials $g$ and $h$ in $K[X]$, both
with degree $\ge 2$, such that $f =g\circ h$.  If $f$ cannot be
expressed as such a composition, we call $f$ \emph{indecomposable}.



\begin{theorem}\label{th-main}  Let $f\in \Qq[X]$ be an indecomposable polynomial with~$\deg(f)=d\geq 3$.
  \par
  \emph{(1)} For any~$a\geq 1$, 
  $$ 
  \sum_{q\le x} |W(a;q)|^2 \ll x (\log \log x)^{(d-1)^2}.
  $$
  \par
  \emph{(2)} There
  exists~$\gamma>0$, depending only on~$d$, such that for
  any~$a\geq 1$,
  $$
  \sum_{q\le x} |W(a;q)| \ll \frac{x}{(\log x)^{\gamma}}. 
  $$
  \end{theorem}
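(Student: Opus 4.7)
The plan is to exploit twisted multiplicativity to reduce both bounds to local estimates at each prime. For $q = p_1 \cdots p_k$ squarefree with $(a, q) = 1$,
\[
|W(a;q)|^r = \prod_{i=1}^k |W(a\,\overline{q/p_i};\,p_i)|^r,
\]
so the problem becomes one of controlling products of local factors $|W(b;p)|^r$. The key local input is a Parseval identity,
\[
\sum_{b \bmod p}|W(b;p)|^2 = \#\{(x,y)\bmod p:f(x)\equiv f(y)\}.
\]
Since $f$ is indecomposable of degree $d\ge 3$, the count on the right is uniformly $O_d(p)$; in the generic case where the classical Fried theorem makes $(f(X)-f(Y))/(X-Y) \in \Qq[X,Y]$ absolutely irreducible of degree $d-1$, Lang--Weil gives the sharper $(p-1)^{-1}\sum_{(b,p)=1}|W(b;p)|^2 = 1 + O(p^{-1/2})$. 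In either case, Cauchy--Schwarz bounds the local $L^1$ mean $\expect|W(b;p)| \le \sqrt{O_d(1)}$, strictly smaller than the pointwise Weil bound $d-1$.

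For part~(1), I would write $|W(b;p)|^2 = c_p + \delta(b,p)$, with $c_p = \expect_b|W(b;p)|^2 \le d-1$ and $\sum_b \delta(b,p) \ll \sqrt p$. Expanding the product over primes dividing $q$,
\[
|W(a;q)|^2 = \sum_{d\mid q}\prod_{p\mid d}\delta(a\,\overline{q/p},p)\prod_{p\mid q/d}c_p,
\]
the $d=1$ term contributes a multiplicative sum $\sum_{q \le x}\prod_{p\mid q}c_p$ of order $x(\log x)^{\bar c-1}$ with $\bar c \le d-1$. The off-diagonal terms, for which $|\delta|\le (d-1)^2$ pointwise, acquire extra cancellation upon summing $q=de$ over squarefree $e$ coprime to $d$, since the twists $\overline{e}\bmod p$ then vary over residue classes modulo each $p\mid d$ and the mean-value $\sum_b\delta(b,p)\ll\sqrt p$ supplies a saving per prime in $d$. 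Standard bookkeeping, combined with Mertens-type bounds on the moments of $(d-1)^{2\omega(\cdot)}$, yields the target $x(\log\log x)^{(d-1)^2}$.

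For part~(2), Cauchy--Schwarz applied to (1) yields only $x(\log\log x)^{(d-1)^2/2}$, so a different route is required. I would apply a Hal\'asz--Montgomery-type mean-value inequality to the ``nearly multiplicative'' function $q\mapsto|W(a;q)|$: the strict gap between the pointwise bound $d-1$ and the average local size $\expect|W(b;p)| \le \sqrt{d-1} < d-1$ (valid since $d\ge 3$) provides, via a convexity argument in the style of Hal\'asz for multiplicative functions whose means are bounded below their pointwise suprema, a power saving of $(\log x)^{-\gamma}$ with $\gamma = \gamma(d) > 0$ depending only on $d$.

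The principal obstacle in both parts is the twisting: $|W(a;q)|^r$ is not literally a multiplicative function of $q$, because the local arguments $a\,\overline{q/p_i}$ couple the various prime factors of $q$. To implement the moment expansion of part~(1) or the Hal\'asz argument of part~(2), the twists must be decoupled---for instance by conditioning on $q$ modulo a short product of small auxiliary primes and exploiting orthogonality of Dirichlet characters, or by iteratively splitting $q=de$ into coprime squarefree parts and summing conditionally on $d$. Performing this decoupling cleanly, while preserving the sharp exponent $(d-1)^2$ in part~(1) and producing an explicit $\gamma$ in part~(2), is the most delicate step.
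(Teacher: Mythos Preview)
Your sketch for part~(1) points in the right direction: the paper also reduces to the local second moments $c_p=\frac{1}{p}\sum_{(b,p)=1}|W(b;p)|^2$, and the crucial input is that for indecomposable~$f$ one has $\sum_{p\le x}c_p/p=\log\log x+O(1)$ (Corollary~\ref{cor3.4}), i.e.\ $\bar c=1$, not merely $\bar c\le d-1$ as you wrote. The paper packages the decoupling of twists into a clean black box (Theorem~\ref{thm5}), proved via Shiu's Brun--Titchmarsh theorem for multiplicative functions in arithmetic progressions; your moment expansion with conditioning on residue classes would amount to re-proving that theorem by hand, and the $(\log\log x)^{(d-1)^2}$ arises there from the pointwise Weil bound $M=(d-1)^2$, not from any off-diagonal bookkeeping.

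Part~(2) has a genuine gap. Once one has a bound of the shape
\[
\sum_{q\le x}|W(a;q)|\ \ll\ \frac{x}{\log x}\,(\log\log x)^{d-1}\exp\Bigl(\sum_{p\le x}\frac{g(p)}{p}\Bigr),\qquad g(p)=\frac{1}{p}\sum_{(b,p)=1}|W(b;p)|,
\]
a power saving $(\log x)^{-\gamma}$ requires $\sum_{p\le x}g(p)/p\le(1-\gamma')\log\log x$, i.e.\ $g(p)<1$ by a definite margin on average. Your comparison of $g(p)\le\sqrt{d-1}$ against the pointwise sup $d-1$ is the wrong one: for nonnegative (twisted-)multiplicative functions, Hal\'asz/Wirsing-type mean values are governed by the size of $g(p)$ relative to~$1$, not to the sup. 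In the generic case $(f(X)-f(Y))/(X-Y)$ is absolutely irreducible, so $c_p=1+O(p^{-1/2})$ for every~$p$, and Cauchy--Schwarz gives only $g(p)\le\sqrt{c_p}=1+O(p^{-1/2})$, hence $\sum g(p)/p\le\log\log x+O(1)$ with \emph{no} saving. Equivalently, the inequality $|y|\le(1+y^2)/2$ is tight at $|y|=1$, and nothing you have written rules out $|W(b;p)|$ being identically~$1$.

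The missing idea is a \emph{fourth-moment} input: the paper proves (Theorem~\ref{th-4}) that for every indecomposable $f$ of degree $\ge 3$ there is a set of primes of density $\ge\delta(d)>0$ on which $\frac{1}{p}\sum_{(b,p)=1}|W(b;p)|^4\ge 2+O(p^{-1/2})$. This forces nonzero variance of $|W(b;p)|^2$ and hence, via a pointwise inequality of the form
\[
|y|\ \le\ \frac{1+y^2}{2}+\frac{3/2-y^4}{C_d}\qquad (|y|\le d-1),
\]
yields $g(p)\le\frac12+\frac12 c_p-c_d'$ on that positive-density set, which is exactly the deficit needed. Establishing Theorem~\ref{th-4} for \emph{all} indecomposable~$f$ (not only Sidon--Morse ones) is the substantive step and rests on Deligne's semisimplicity theorem and Katz's determination of the monodromy groups of the associated $\ell$-adic sheaves; no elementary convexity or Hal\'asz-type argument can replace it.
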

  
The implied constants above (and in what follows) are allowed to depend on $f$.  Throughout we ignore linear polynomials 
where $W(a;q)$ is usually $0$, and quadratic polynomials where $|W(a;q)|$ is usually $1$ (since these are quadratic Gauss sums).  
  
The possibility of obtaining non-trivial bounds for
$$
\sum_{q\leq x}|W(a;q)|
$$
(with~$f$ allowed to be a rational function) was first pointed out by
Hooley in~\cite{hooley} in the case of Kloosterman sums.  Introducing ideas from algebraic geometry (notably from the 
work of Katz~\cite{katz-esde}),  Fouvry and Michel~\cite{fm} refined and extended Hooley's work to general exponential sums. 
Under a hypothesis that the polynomial $f$ is generic (in a sense to be made precise below, see also~\cite[H.1, H.2, H.3, H.3']{fm})
Fouvry and Michel proved in~\cite[Th. 1.5]{fm} that
$$
\sum_{q\leq x}|W(a;q)|\ll x(\log \log x)^{k_f-1}
$$
for some explicit integer~$k_f\geq 1$. 
Theorem~\ref{th-main} refines this in two ways: Firstly it applies to a
larger class of polynomials $f$, with the much simpler criterion of
being indecomposable (for instance, if the degree of $f$ is prime, then
$f$ is automatically indecomposable).  Secondly, part (2) gives an
improvement in the exponent of $\log x$ over the corresponding result in
Fouvry and Michel, showing qualitatively that the average of $|W(a;q)|$
over $q\leq x$ tends to~$0$.

The proof of the second part of Theorem~\ref{th-main} relies on the
following result, which may be of independent interest.

\begin{theorem}\label{th-4}
  Let~$f\in\Zz[X]$ of degree~$d\geq 3$. Then, one of the following two
  possibilities holds:
  \par 
  \emph{(1)} The limit
  $$
  \lim_{p\to+\infty} \frac{1}{p}\sum_{a\in \Ff_p^{\times}} |W(a;p)|^4
  \qquad \text{ exists and equals } 2.
  $$
  \par
  \emph{(2)} There exists $\delta >0$ (depending only on~$d$) and a
  subset of primes with positive density $\ge \delta$ on which
  $$ 
   \frac{1}{p}\sum_{a\in \Ff_p^{\times}} |W(a;p)|^4 \ge 3 + O(p^{-1/2}). 
   $$ 
 \end{theorem}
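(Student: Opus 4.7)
\noindent The plan is to express the fourth moment as a count, and then reduce
to a monodromy computation for a family of curves. Opening the fourth power
and using orthogonality of additive characters on $\Ff_p$ (and accounting for
$|W(0;p)|^4=p^2$) yields
\begin{equation*}
\frac{1}{p}\sum_{a\in\Ff_p^\times}|W(a;p)|^4
\;=\; \frac{1}{p^2}\sum_{v\in\Ff_p}\bigl(N_2(v)-p\bigr)^2,
\end{equation*}
where $N_2(v)=\#\{(x,y)\in\Ff_p^2:f(x)+f(y)=v\}$ counts points on the affine
curve $C_v=\{f(x)+f(y)=v\}\subset\Aa^2$. For generic $v$ this curve is smooth
and Weil's bound gives $N_2(v)=p+E(v)$ with $E(v)=-\Tr(\frob_p\mid H^1_c(C_v))+O(1)$,
so the moment reduces, up to a negligible error, to the normalised second moment
of Frobenius traces over the family $\{C_v\}_{v\in\Aa^1}$.

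\medskip

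Next, I would organise the family as a constructible sheaf $\mcF=R^1\Phi_!\bQl$
on $\Aa^1$, where $\Phi(x,y)=f(x)+f(y)$; off a finite singular locus, $\mcF$ is
lisse and pure of weight $1$. Applying the Grothendieck--Lefschetz trace formula
and Deligne's weight theorem to $\mcF\otimes\mcF$, the dominant contribution
comes from $H^2_c$, i.e.\ from the $\pi_1^{\mathrm{geom}}$-coinvariants with a
Tate twist, yielding
\begin{equation*}
\frac{1}{p^2}\sum_{v\in\Ff_p}E(v)^2
\;=\; \frac{1}{p}\,\Tr\bigl(\frob_p\bigm|(\mcF\otimes\mcF)_{\pi_1^{\mathrm{geom}}}\bigr)
\;+\; O(p^{-1/2}).
\end{equation*}
The swap involution $\sigma(x,y)=(y,x)$ preserves $\Phi$ and decomposes
$\mcF=\mcF_+\oplus\mcF_-$ into $\pm 1$-eigensheaves, each self-dual by relative
Poincar\'e duality.

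\medskip

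The dichotomy then becomes a monodromy statement. In the \emph{generic} scenario,
the geometric monodromy of $\mcF$ is the full duality-preserving product
$G_+\times G_-$ (symplectic or orthogonal) acting via the standard representation
on each $\mcF_\pm$, with $\mcF_+$ and $\mcF_-$ sharing no common constituent; a
Schur-lemma computation gives $\dim(\mcF\otimes\mcF)^{\pi_1^{\mathrm{geom}}}=2$
(one canonical duality form on each of $\mcF_\pm^{\otimes 2}$, none on the cross
terms), both invariants $\Qq$-rational, so Frobenius acts as multiplication by $p$
on the Tate-twisted coinvariants, and case~(1) follows with limit $2$. Otherwise
$\dim(\mcF\otimes\mcF)^{\pi_1^{\mathrm{geom}}}\ge 3$: either one $\mcF_\pm$
decomposes further (as for $f=X^d$, where the action of $\mu_d$ on the $C_v$
produces extra summands), or an unexpected isomorphism between $\mcF_+$ and
$\mcF_-$ gives rise to a cross-invariant. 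Any such extra invariant is defined
over a finite extension $K_f/\Qq$ whose degree is bounded in terms of $d$ alone
(since the Galois action factors through $\GL_{(d-1)^2}$), and Chebotarev's
density theorem provides a set of primes of density $\ge 1/[K_f:\Qq]$, depending
only on $d$, on which the Frobenius acts trivially on every geometric invariant;
for these primes, the moment is $\ge 3+O(p^{-1/2})$, yielding case~(2). The main
obstacle is to carry out this monodromy classification uniformly in $f$ of
degree $d$, and to bound $[K_f:\Qq]$ in terms of $d$ alone.
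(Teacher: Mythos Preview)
Your framework is the Fourier-dual of the paper's: the paper works on the $a$-side with the rank-$(d-1)$ Fourier sheaf $\mathcal{F}_p$ whose trace function is $W(a;p)$, and expresses the fourth moment as $\Tr(f_p\mid V_p)+O(p^{-1/2})$ with $V_p=\End(\End(\bQl^{d-1}))^{G_p^g}$; you instead work on the $v$-side with the higher-rank sheaf $R^1\Phi_!\bQl$. Both reduce to the trace of Frobenius on a space of geometric invariants, and the dichotomy becomes $\dim=2$ versus $\dim\ge 3$. So the overall shape is right, but two essential steps are missing, and one stated justification is wrong.

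First, your parenthetical ``since the Galois action factors through $\GL_{(d-1)^2}$'' does not bound $[K_f:\Qq]$: a compact subgroup of $\GL_N(\bQl)$ need not be finite, let alone of order bounded in terms of $N$. The paper's argument (Proposition~\ref{pr-motivic}) is substantially deeper. One first shows that Frobenius acts on $V_p$ with \emph{finite order}: this uses that, once $\mathcal{F}_p$ is geometrically irreducible (equivalently $(f(X)-f(Y))/(X-Y)$ is absolutely irreducible, which is handled separately), the connected component $G^g_{p,0}$ already acts irreducibly on $\bQl^{d-1}$, so by Schur the arithmetic quotient $G_p/(G^g_{p,0}Z\cap G_p)$ embeds in the finite outer automorphism group of the semisimple group $G^g_{p,0}$. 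One then globalises these $V_p$ into a single Galois representation over $\Qq$ and concludes finiteness of its image via the no-small-subgroups property of $\ell$-adic Lie groups. The uniform bound on $[K:\Qq]$ in terms of $d$ requires yet more: Jordan's theorem together with the fact that there are only finitely many isomorphism types of connected semisimple subgroups of $\GL_{d-1}$. In your setup $\mcF$ is visibly reducible ($\mcF_+\oplus\mcF_-$), so the irreducibility input that makes the finite-order step work is not available as directly, and you would need to run this argument componentwise with care.

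Second, you treat ``generic'' versus ``non-generic'' as a property of $f$, but you give no mechanism forcing the monodromy picture to be the same for all large $p$. The paper invokes Katz's stabilisation theorem \cite[Th.~14.3.4]{katz-esde} to know that $G^g_{p,0}$ is independent of $p$ for $p$ large; only then can one say that if $\dim V_p=2$ for one split prime, then $G^g_{p,0}=\SL_{d-1}$ and the limit equals $2$ for \emph{all} large $p$. Without such a stabilisation input, the dichotomy could in principle depend on $p$, and case~(1) as stated (a limit over all primes) would not follow from your outline. You correctly flag this as ``the main obstacle'', but the proposal does not supply the tool that resolves it.
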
 
  
For a generic (again in a sense to be made precise later) polynomial
$f$, the first case of the theorem holds.

\begin{remark}
  The work of Katz~\cite{katz-perversity} contains material from
  which it is likely that one can deduce Theorem~\ref{th-4}. However, in
  view of the different focus and the generality
  of~\cite{katz-perversity}, our independent and slightly more elementary
  proof seems worth including.
\end{remark}

\subsection{Sums of twisted-multiplicative functions}

A key feature of the exponential sums considered above is their twisted
multiplicativity.  In this section we formulate, following
Hooley~\cite{hooley}, Fouvry and Michel~\cite{fm}, and our own recent
paper~\cite{ks}, a general result on bounding averages of twisted
multiplicative functions.


Suppose we are given a function $V$ that associates to
each prime $p$ and each reduced residue class $a \pmod p$ a complex
number $V(a;p)$.  Extend this to a function $V(a;q)$ where $q$ is
square-free and $a\pmod q$ is a reduced residue class by ``twisted
multiplicativity'': that is, if $q=q_1 q_2$ with $(q_1, q_2)=1$ then
\begin{equation} 
  \label{7.1} 
  V(a;q_1 q_2) = V(a \bar{q_1}; q_2) V(a \bar{q_2}; q_1).
\end{equation} 
\par
Set $V(a;q)=0$ if $q$ is not square-free, or if $a$ is not coprime
to~$q$.  For each prime $p$, let $G(p) \ge g(p) \ge 0$ be such that
\begin{equation} 
  \label{7.2} 
  \max_{(a, p)=1} |V(a;p)| \le G(p), \qquad \text{and}
  \qquad \frac 1p \sum_{(a,p)=1} |V(a;p)| \le g(p). 
\end{equation} 
Extend $g$ and $G$ to all square-free integers using multiplicativity,
so that~(\ref{7.2}) remains valid for all~$q$.

The question then is to obtain, under suitable conditions, a bound for
$$
\sum_{q\le x} |V(a;q)|
$$
that improves upon the trivial bound
$$
\sum_{q\le x} |V(a;q)|\leq \sum_{q\le x} G(q).
$$


\begin{theorem}\label{thm5}  Let $M>0$ be such that $G(p)\le M$ for all primes
  $p$.  Then,  for any fixed integer~$a\geq 1$
  and for all large $x$, we have
  $$ 
  \sum_{q\le x} |V(a;q)| \ll \frac{x}{\log x} \prod_{p\le x} \Big( 1+
  \frac{g(p)}{p}\Big) (\log \log x)^{M},
  $$
  where the implied constant may depend on $M$. 
\end{theorem}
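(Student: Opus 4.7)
The proof follows a Shiu-type argument adapted to the twisted-multiplicative setting, extending ideas of Hooley~\cite{hooley} and Fouvry-Michel~\cite{fm}. The main device is to split each squarefree $q \le x$ into its small-prime and large-prime parts: setting $y = \log x$, write $q = rs$ with $r$ the $y$-smooth part and $s$ the $y$-rough part. Twisted multiplicativity gives $V(a;q) = V(a\bar s;r)\, V(a\bar r;s)$, hence
$$|V(a;q)| \le G(r)\,|V(a\bar r;s)| \le M^{\omega(r)}\,|V(a\bar r;s)|.$$

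The contribution of the small part $r$ is bounded by Mertens' theorem:
$$
\sum_{\substack{r \ge 1,\ \mu^2(r) = 1 \\ P^+(r) \le y}} \frac{M^{\omega(r)}}{r} \le \prod_{p \le y}\Bigl(1 + \frac{M}{p}\Bigr) \ll (\log y)^{M} = (\log\log x)^{M}.
$$
This accounts for the extra $(\log\log x)^M$ factor in the statement of the theorem.

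The key remaining input is a Shiu-type estimate for the large part: for any $b$ coprime to $\prod_{p \le y} p$ and any $N \ge 2$,
$$
\sum_{\substack{s \le N,\ \mu^2(s) = 1 \\ P^-(s) > y}} |V(b;s)| \ll \frac{N}{\log N} \prod_{y < p \le N}\Bigl(1 + \frac{g(p)}{p}\Bigr).
$$
Granting this estimate, combining it with the small-part bound and checking that the contribution of $r > \sqrt{x}$ is negligible (smooth numbers $\Psi(x, \log x) = o(x)$) yields the claimed bound.

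The main obstacle is proving the Shiu-type estimate above. Unlike the classical Shiu bound for multiplicative functions, here $s \mapsto |V(b;s)|$ is only twisted-multiplicative; its local factor at each prime $p \mid s$ depends on $b\overline{s/p} \pmod{p}$, not just on $p$. I would follow the Hall-Tenenbaum proof of Shiu's theorem: multiply by $\log s = \sum_{p \mid s}\log p$, use twisted multiplicativity to extract one prime at a time, and apply the hypothesis $\sum_{(c,p)=1}|V(c;p)| \le p\,g(p)$. The crucial point is that for primes $p > y = \log x$, as $s$ ranges over $y$-rough squarefree integers up to $N$, the residue class of $s/p \pmod{p}$ equidistributes, letting us replace the pointwise bound $|V(c;p)| \le M$ with the average $g(p)$. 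The resulting functional inequality for the sup-norm $T^*(N) = \sup_b \sum_{s \le N,\, P^-(s) > y} |V(b;s)|$ must then be solved so as to close the induction without accumulating losses at each step.
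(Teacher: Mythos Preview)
Your decomposition into smooth and rough parts is the same idea as the paper's, but you have reversed the roles of the two pieces, and this is where the difficulty lies.

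The paper also writes $q=rs$, but with $s$ the $z$-smooth part and $r$ the $z$-rough part (where $z=x^{1/(\alpha\log\log x)}$, larger than your $y=\log x$). It then uses twisted multiplicativity in the form $|V(a;q)|\le G(r)\,|V(\bar r a;s)|$, i.e.\ the crude pointwise bound $G$ is placed on the \emph{rough} part $r$, not the smooth part. The point is that $G$ is a genuine multiplicative function, so Shiu's Brun--Titchmarsh theorem applies directly to $\sum_{r\le x/s,\ r\equiv t\pmod s} G(r)$. After this, one sums over residue classes $t\pmod s$, and the average $\frac{1}{s}\sum_t |V(\bar t a;s)|\le g(s)$ enters only for the \emph{smooth} part, where no Shiu-type input is needed. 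The $(\log\log x)^M$ arises because $\sum_{z<p\le x}G(p)/p\le M\log\log\log x+O(1)$ thanks to the choice of $z$.

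In your arrangement the crude bound $M^{\omega(r)}$ goes on the smooth part and you need the \emph{average} bound $g$ to appear on the rough part. This forces you to prove a Shiu-type estimate for the twisted-multiplicative function $s\mapsto |V(b;s)|$ itself. Your proposed Hall--Tenenbaum iteration runs into a circularity you partly acknowledge: after extracting a prime $p\mid s$, the remaining sum is $\sum_{s'} |V(b\bar p;s')|\,|V(b\bar{s'};p)|$, and to replace $|V(b\bar{s'};p)|$ by $g(p)$ you would need to split by $s'\pmod p$ and bound $\sum_{s'\equiv c\pmod p}|V(b\bar p;s')|$ uniformly in $c$. That is a statement about twisted-multiplicative sums in arithmetic progressions, which is at least as hard as what you are trying to prove; mere equidistribution of $y$-rough integers mod $p$ does not suffice because the weight $|V(b\bar p;s')|$ already depends nontrivially on $s'$. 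The functional inequality for $T^*(N)$ that you can actually close (bounding $|V(\cdot;p)|\le M$) yields only $G$ in place of $g$, which is too weak.

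The fix is simply to swap the roles: put $G$ on the rough side so that the classical Shiu theorem does the work, and average over residues to extract $g$ on the smooth side.
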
 

\begin{remark}
  (1) The twisted multiplicativity \eqref{7.1} is naturally connected
  to the Chinese Remainder Theorem via the Fourier transform.  Suppose
  that for each prime $p$ and any residue class $a\pmod p$, we are
  given a complex number $v(a;p)$.  We extend $v$ to square-free
  moduli $q$ and any residue class $a \pmod q$ by means of the Chinese
  Remainder Theorem: that is we set
  $$ 
  v (a; q) = \prod_{p|q} v(a;p). 
  $$  
  Consider now the Fourier transform of $v$:  
  $$ 
  V(a; q) = \sum_{b \mods q} v(b;q) e(ab/q).
  $$ 
  Then $V(a;q)$ satisfies the twisted multiplicative relation
  \eqref{7.1}.

  If $v(a;p)$ corresponds to a probability measure (thus all $v(a;p)$
  are non-negative and $\sum_a v(a;p) =1$) then $|V(a;p)| \le 1$ for all
  $a \pmod p$, so that we may use $G(p)=1$.  Bounding the $L^1$-norm by
  the $L^2$-norm, we may take
  $$ 
  g(p) = \Big( \frac 1p \sum_{a=1}^{p-1} |V(a;p)|^2 \Big)^{\frac 12} =
  \Big( \sum_{a=1}^{p} |v(a;p)|^2 - \frac 1p \Big)^{\frac 12},
  $$ 
  upon using Parseval.
  \par
  (2) In the applications to equidistribution in~\cite{ks}, the
  functions that occur are Weyl sums of the form
  $$
  V(a;q)=\frac{1}{\rho(q)}\sum_{x\in A_q}e\Bigl(\frac{a\cdot x}{q}\Bigr)
  $$
  for some $h\in\Zz^n\setminus \{0\}$, where~$A_q\subset (\Zz/q\Zz)^n$
  are non-empty sets ``defined by the Chinese Remainder Theorem'', and
  $\rho(q)=|A_q|$.
\end{remark}


\subsection{Non-correlation of exponential sums for different polynomials} 

Our next results are attempts to establish that the exponential sums
associated to two different polynomials~$f$ and~$g$ are
uncorrelated. Here we use the notation $W_f(a;q)$ instead of $W(a;q)$ to
keep track of the dependency on the polynomial.  The results here will
depend on polynomials being suitably generic (as in the work of Fouvry
and Michel \cite{fm} mentioned earlier), and we begin by making this
notion precise.

\begin{definition}[Morse polynomial]\label{def-morse}
  Let~$K$ be a field.  A polynomial $f \in K[X]$ of degree~$d\geq 1$ is
  called \emph{Morse} if it has no repeated roots, its derivative $f'$
  is squarefree of degree~$d-1$, and the values of $f$ at the zeros of
  $f'$ (in an algebraic closure of $K$) are distinct.
\end{definition}

\begin{remark}
  The values of $f$ at the zeros of the derivative of $f$ are known as
  \emph{critical values} of~$f$.  Note that when $f'$ is even, the
  critical values appear in pairs $a+f(0)$, $-a+f(0)$ where $a$ is a
  critical value of $f(x)- f(0)$.
  \par
  If $d$ is smaller than the characteristic of~$K$, then the condition
  that $\deg(f')=d-1$ is automatically fullfilled.
  \par
  If~$f$ is a Morse polynomial, then $0$ is not a critical value of~$f$
  (since there would then be a double zero).
\end{remark}

It is easy to check that a Morse polynomial $f$ is indecomposable
over~$K$ (see Lemma~\ref{lm-indec} below).

We recall that in an abelian group $A$, a subset $S\subset A$ is called
\emph{Sidon} if the equation $a+b=c+d$ with $(a,b,c,d)\in S^4$ has
only the obvious solutions where $a\in \{c,d\}$.

We will say that $S\subset A$ is a \emph{symmetric Sidon
  set}
if there exists $\alpha\in A$ such that $S=\alpha-S$, and the equation
$a+b=c+d$ with $(a,b,c,d)\in S^4$ has only the obvious solutions where
$a\in \{c,d\}$ or $b=\alpha-a$.

We require one last item of terminology. For any field~$K$, two
polynomials $f$ and~$g$ in $K[X]$ are \emph{linearly equivalent
  over~$K$} if there exist $a$, $b$, $c$, $d$ in~$K$, with $a$ and $c$
non-zero, such that
$$
g(X)=af(cX+d)+b.
$$
If $a=1$ and $b=0$, then we say that$~f$ and~$g$ are \emph{strictly
  linearly equivalent}.
Note that the sets $V_f$ and $V_g$ of critical values of $f$ and $g$ are
then related by
$$
V_g=aV_f+d.
$$
In particular, if~$V_f$ is a Sidon set (resp. a symmetric Sidon set)
then so is~$V_g$.

\begin{definition}[Sidon--Morse polynomial]\label{def-generic}
  Let~$K$ be a field. A polynomial $f\in K[X]$ of degree~$d\geq 2$ is
  called \emph{Sidon--Morse} if it is Morse and one of the following
  holds:
  \begin{enumerate}
  \item The set of critical values of~$f$ is a Sidon set in the additive
    group of~$K$.
  \item The polynomial~$f$ is linearly equivalent to an odd
    polynomial~$g$ and the set of critical values of~$g$ is a
    symmetric Sidon set in~$K$.
  \end{enumerate}
  \par
  For a polynomial $f\in A[X]$, with $A$ an integral domain, we say that
  $f$ is Morse (or Sidon--Morse) if the definition is satisfied for the
  field of fractions of $A$.
\end{definition}


\begin{remark}
  (1) To distinguish between the two alternatives above, we will say
  that~$f$ is a \emph{symmetric Sidon--Morse polynomial} in the second
  case.
  \par
  (2) It would seem to be more natural to define a symmetric Sidon
  polynomial to be one where the set of critical values of~$f$ is a
  symmetric Sidon set. This condition is implied by our definition, and
  it may in fact be that this is an equivalent definition (at least
  over~$\Qq$), but we do not know if this is the case. We will see how,
  at some crucial point in the proof of Theorem~\ref{th-connect} below,
  this alternative definition is not sufficient to proceed.
  \par
  (3) Any polynomial~$f$ of degree $d\geq 3$ in~$\Zz[X]$ whose
  derivative has Galois group $\mathfrak{S}_{d-1}$ is a (non-symmetric)
  Sidon--Morse polynomial over $\Qq$ (see~\cite[proof of
  Th. 7.10.6]{katz-esde}).  It is then a Sidon--Morse polynomial over
  $\Ff_p$ for all but finitely many~$p$. In particular, a ``generic''
  polynomial in $\Zz[X]$, in a natural sense, is Sidon--Morse over
  $\Qq$.
\end{remark}

\begin{theorem}\label{th-main2}
  \emph{(1)} Let~$f$ and~$g$ be polynomials in~$\Zz[X]$ with degree
  $d_f\geq 3$ and~$d_g$ respectively. Assume that $f$ is Sidon--Morse
  over $\Qq$ and that $d_f>d_g$.
  Then
  $$
  \sum_{q\le x} |W_f(a;q)\overline{W_g(a;q)}|^2\ll x(\log \log x)^A
  $$
  for some~$A$ depending only on $d_f$ and~$d_g$, where the implied
  constant depends on~$f$ and~$g$.
  \par
  \emph{(2)} Let $m\geq 1$ be an integer and let $f_1$, \ldots, $f_m $
  be polynomials of degrees $d_i=\deg(f_i)\geq 3$. Assume that all $f_i$
  are Sidon--Morse polynomials over~$\Qq$ and moreover that for any
  $i\not=j$, the polynomials $f_i$ and~$f_j$ are not linearly equivalent
  over~$\bar{\Qq}$.
  \par
  Let~$s$ be the number of polynomials~$f_i$ such that $f_i$ is a
  symmetric Sidon--Morse polynomial of odd degree~$\geq 5$.  Then
  for~$x\geq 2$, we have
  \begin{gather*}
    \sum_{q\le x} |W_1(a;q)\cdots W_m(a;q)|\ll \frac{x}{(\log x)^{\gamma}}
    \\
    \sum_{q\le x} |W_1(a;q)\cdots W_m(a;q)|^2\ll x(\log \log x)^A
    \\
    \sum_{q\le x} |W_1(a;q)\cdots W_m(a;q)|^4\ll x(\log
    x)^{2^{m-s}3^s-1}(\log \log x)^A
  \end{gather*}
  for some~$\gamma>0$ and some $A\geq 0$ depending only on $m$ and
  $(d_1,\ldots,d_m)$, where $W_i(a;q)=W_{f_i}(a;q)$. The implied
  constants depend on the polynomials.
\end{theorem}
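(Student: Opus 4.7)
The plan is to deduce each of the four inequalities by applying Theorem~\ref{thm5} to an appropriate twisted-multiplicative function $V$: take $V(a;q) = |W_f(a;q)\overline{W_g(a;q)}|^2$ for part~(1), and $V(a;q) = |W_1(a;q)\cdots W_m(a;q)|^{k}$ with $k = 1, 2, 4$ for the three inequalities of part~(2). Each such $V$ inherits twisted multiplicativity directly from the individual $W_i$, and the Weil bound furnishes a uniform constant $G(p)\le M$ depending only on the degrees. Theorem~\ref{thm5} then converts any asymptotic $g(p)\to c$ into a bound of shape $x(\log x)^{c-1}(\log\log x)^{M}$, via $\prod_{p\le x}(1+c/p)\asymp(\log x)^{c}$, so the entire analytic task reduces to determining
$$
g(p) \;=\; \frac{1}{p}\sum_{(a,p)=1}|V(a;p)|
$$
asymptotically as $p\to\infty$ in each of the four cases.

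The key ingredient is a joint-independence statement for the Fourier-transform sheaves $\mathcal{F}_i$ attached to the $f_i$ on $\Aa^1/\Fp$, whose trace function at $a$ is (up to sign) $W_{f_i}(a;p)$. Katz's results in~\cite{katz-esde} identify the geometric monodromy group $G_i$ of $\mathcal{F}_i$ under the Sidon--Morse hypothesis as a large classical group acting in its standard representation: $G_i=\Sp_{d_i-1}$ exactly when $f_i$ is symmetric Sidon--Morse of odd degree $\ge 5$, and a ``non-self-dual'' group (Case~(1) of Theorem~\ref{th-4}) otherwise. Under the assumption that the $f_i$ are pairwise not linearly equivalent over $\bar{\Qq}$, a Goursat--Kolchin--Ribet argument then shows that the joint geometric monodromy of $(\mathcal{F}_1,\ldots,\mathcal{F}_m)$ is the full product $G_1\times\cdots\times G_m$, and Deligne's equidistribution theorem yields
$$
g(p)\;=\;\prod_i \int_{K_i} |\Tr|^{e_i}\,dg_i \;+\; O(p^{-1/2}),
$$
where $K_i$ is a maximal compact form of $G_i$ and $e_i$ is the exponent on $|W_i|$ in $V$. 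Schur orthogonality gives $\int_{K_i}|\Tr|^2=1$, the standard fourth-moment calculation gives $\int_{K_i}|\Tr|^4\in\{2,3\}$ (the value $3$ only for $\Sp$), and non-constancy of $|\Tr|$ on $K_i$ gives the strict inequality $\int_{K_i}|\Tr|<1$. Combining these, one reads off $g(p)\to 1$ for both $L^{2}$ bounds, $g(p)\to 2^{m-s}3^{s}$ for the $L^{4}$ bound, and $g(p)\to c<1$ for the $L^{1}$ bound (with $\gamma=1-c>0$); part~(1) is the $m=2$ special case in which $d_f>d_g$ already precludes linear equivalence of $f$ and $g$.

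The main obstacle is the joint-monodromy step. Executing Goursat--Kolchin--Ribet here requires showing that any common semisimple quotient that could arise between two $\mathcal{F}_i$ and $\mathcal{F}_j$ forces the underlying polynomials $f_i$ and $f_j$ to be linearly equivalent over $\bar{\Qq}$, contradicting the hypothesis. The reconstruction argument is delicate in the symmetric case, which is precisely why Definition~\ref{def-generic} requires the stronger condition that $f$ itself be linearly equivalent to an odd polynomial, rather than merely requiring its critical values to form a symmetric Sidon set, as foreshadowed in Remark~(2) following that definition.
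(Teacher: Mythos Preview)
Your plan for part~(2) is essentially the paper's: apply Theorem~\ref{thm5} to $|W_1\cdots W_m|^k$ for $k=1,2,4$, with the prime averages computed from the joint monodromy via Goursat--Kolchin--Ribet (this is Proposition~\ref{pr-mult} and Corollary~\ref{cor-mult} in the paper). One pleasant difference: for the $L^1$ bound you argue directly that $\prod_i\int_{K_i}|\Tr|<1$ by Cauchy--Schwarz and non-constancy of $|\Tr|$, whereas the paper instead reuses the trick from the proof of Theorem~\ref{th-main}(2), bounding $|y|$ by a combination of $y^2$ and $y^4$ and feeding in the second and fourth moments from Corollary~\ref{cor-mult}. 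Your route is cleaner here because the monodromy is already fully known; the paper's detour through moments is only really forced in Theorem~\ref{th-main}, where the monodromy is not assumed to be known. (Minor caveat: equidistribution only gives $g(p)\to c$ for the continuous test function $|\Tr|$, not a $O(p^{-1/2})$ rate; but convergence suffices, since $g(p)\le (1+c)/2$ for large $p$.)

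There is, however, a genuine gap in your treatment of part~(1). You write that ``part~(1) is the $m=2$ special case in which $d_f>d_g$ already precludes linear equivalence of $f$ and $g$.'' But part~(1) does \emph{not} assume that $g$ is Sidon--Morse (nor even that $d_g\ge 3$), so it is not a special case of part~(2). Without the Sidon--Morse hypothesis on $g$ you have no identification of $G_g$, and the Goursat--Kolchin--Ribet argument as you sketched it (matching simple factors of known groups and ruling out twists) does not run. The paper handles this via the separate Proposition~\ref{pr-noncorr}: knowing only that $G_f$ is $\SL_{d_f-1}$ or $\Sp_{d_f-1}$, one observes that any proper normal subgroup of $G_f$ is finite, so a nontrivial Goursat overlap would embed (a quotient of) $G_f$ into $G_g\subset\GL_{d_g-1}$; but the minimal faithful representation of the Lie algebra of $G_f$ has dimension $d_f-1>d_g-1$, a contradiction. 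This is where the hypothesis $d_f>d_g$ is actually used, not merely to rule out linear equivalence. You should replace the ``special case of part~(2)'' claim by this one-sided argument.
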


\begin{remark}
  (1) Since the upper-bounds for two polynomials essentially match those
  in Theorem~\ref{th-main}, this result suggests that the exponential
  sums are uncorrelated. However, we cannot prove it rigorously, since
  we would need to prove some matching lower-bound, such as
  $$
  \sum_{q\le x} |W_f(a;q)|^4\gg x(\log \log x)^B
  $$
  for any~$B\geq 1$, for instance. The best current lower-bound that we
  can achieve in general (by adapting the method of Fouvry and
  Michel~\cite[\S 4]{fm}) is
  $$
  \sum_{q\le x} |W_f(a;q)|^4\gg \frac{x}{\log x}(\log \log x)^B
  $$
  for any~$B\geq 1$ (and the best upper-bound that we
  can give for the last sum is
  $$
  \sum_{q\le x} |W_f(a;q)|^4\ll x(\log x)(\log \log x)^A
  $$
  for some $A$).
  \par
  (2) The genericity assumptions that we impose are not the best
  possible. We will investigate related issues in the paper~\cite{ks3},
  where we will describe in particular other classes of polynomials for
  which Theorem~\ref{th-main2} will apply.
  \par
  (3) In another paper, Fouvry and Michel~\cite[Th.\,1.2,\,1.3]{fm2}
  proved that if~$f$ is a Sidon--Morse polynomial, then there are
  infinitely many squarefree integers $q$ with two prime factors such
  that
  $$
  |W_f(a;q)|\leq q^{-\beta}
  $$
  where $\beta>0$ depends only on the degree of~$f$. It would be
  interesting to extend this property to all indecomposable
  polynomials.
\end{remark}








\subsection{Previous work}

Fouvry and Michel also consider rational functions and lower-bounds. In
the case of the Kloosterman sums
$$
\hypk_2(a;q)=\frac{1}{\sqrt{q}}\sum_{(x,q)=1}
e\Bigl(\frac{ax+\bar{x}}{q}\Bigr)
$$
(i.e., $f(x)=x+1/x$), they obtain
$$
\frac{x}{\log x} \exp((\log\log x)^{5/12})\ll \sum_{q\leq
  x}|\hypk_2(a;q)|\ll \frac{x}{(\log x)^{\delta}}
$$
for any $\delta<1-\tfrac{8}{3\pi}$ (see~\cite[Th. 1.2, 1.3]{fm}).
\par
In this particular case, it is known that if we sum the Kloosterman sums
without taking absolute values, one can prove much stronger estimates
using the spectral theory of automorphic forms, like
$$
\sum_{q\leq x}\hypk_2(1;q)\ll x^{2/3+\eps}
$$
for any $\eps>0$ (see, e.g.,~\cite[\S
16.6]{ik}). Patterson~\cite{patterson1} has also proved a strong result
for certain cubic sums, namely for any non-zero integer~$a$, the
asymptotic formula
$$
\sum_{q\leq x} \sum_{0\leq n<q}e\Bigl(\frac{an^3}{q}\Bigr)\sim
c(a)X^{4/3}
$$
holds for some explicit constant $c(a)>0$, and
Patterson~\cite[Conj.\,2.2]{patterson2} has conjectured similar
asymptotic formulas for all cubic polynomials.
\par
It would be of considerable interest to obtain general conditions on a
twisted-multiplicative function $V(a;q)$, bounded at primes, that ensure
a power saving in the sums
$$
\sum_{q\leq x}V(a;q).
$$

\subsection*{Outline of the paper}

We prove Theorem~\ref{thm5} in the next
section. Section~\ref{sec-prelim} gathers a number of properties of
exponential sums with polynomials, and Section~\ref{sec-main} uses these
results to prove Theorem~\ref{th-main}, assuming Theorem~\ref{th-4}. The
latter is proved in Section~\ref{sec-katz}, and
Section~\ref{sec-generic} discusses generic polynomials. In both of
these, we rely heavily on the foundational studies of
Katz. Section~\ref{sec-mult} concludes with the proof of
Theorem~\ref{th-main2}, and Section~\ref{sec-remark} contains some
hopefully enlightening comments concerning parts of the results of Katz
that we use.

\subsection*{Acknowledgments}

E.K. was partially supported by a DFG-SNF lead agency program grant
(grant number 200020L\_175755).  K.S. is partially supported through a
grant from the National Science Foundation, and a Simons Investigator
Grant from the Simons Foundation.  This work was started when K.S. was
a senior Fellow at the ETH Institute for Theoretical Studies, whom he
thanks for their warm and generous hospitality.
\par
We thank W. Sawin for his comments concerning Section~\ref{sec-katz}.

\section{Sums of twisted-multiplicative functions}

Since the proof of Theorem~\ref{thm5} follows the broad plan of our
earlier work (and is not far from that of Fouvry and Michel~\cite[\S
3]{fm}), we shall be brief.

Put $z=x^{1/(\alpha \log\log x)}$ with $\alpha=3(M^2+1)$. We
factor any integer $q\leq x$ as $q=rs$ where all prime factors of~$s$
are~$\leq z$, and all prime factors of~$r$ are~$>z$. We then have
$$
V(a;q)=V(a;rs)=V(\bar{r}a;s)V(\bar{s}a;r)
$$
by twisted multiplicativity, hence
$$
|V(a;q)|\leq G(r)|V(\bar{r}a;s)|.
$$
\par
We handle first the terms where~$s\leq x^{1/3}$. We split the sum
over~$q\leq x$ according to the residue class of~$r$ modulo~$s$,
getting
\begin{align*}
  \sum_{\substack{q\leq x\\s\leq x^{1/3}}}|V(a;q)|
  &\leq
    \sum_{s\leq x^{1/3}}\sum_{r\leq x/s}G(r)|V(\bar{r}a;s)|
  \leq \sum_{s\leq x^{1/3}}\sum_{t\mods{s}}|V(\bar{t}a;s)|
    \sum_{\substack{r\leq x/s\\r\equiv t\mods{s}}}G(r). 
    \end{align*}
  By Shiu's work on the Brun--Titchmarsh Theorem for multiplicative
functions (see ~\cite[Th.\,1]{shiu})  we may bound the sum over $r$ above by 
$$ 
\ll \frac{x/s}{\log (x/s)} \exp \Big( \sum_{z < p \le x} \frac{G(p)}{p} \Big) \ll \frac{x}{s\log x} \Big(\frac{\log x}{\log z}\Big)^M 
\ll \frac{x}{s\log x} (\log \log x)^{M}.
$$  
Therefore 
\begin{align*}
  \sum_{\substack{q\leq x\\s\leq x^{1/3}}}|V(a;q)| &\ll \frac{x}{\log x} (\log \log x)^M \sum_{s\le x^{1/3}} \frac{1}{s\varphi(s)}\sum_{t\mods{s}}|V(\bar{t}a;s)| \\
  &\ll \frac{x}{\log x} (\log \log x)^M \sum_{s\le x^{1/3}} \frac{g(s)}{\varphi(s)} \ll \frac{x}{\log x} (\log \log x)^M\prod_{p\le x} \Big(1 +\frac{g(p)}{p}\Big). 
 \end{align*}
  

We now consider the contribution of
the terms with~$s>x^{1/3}$.  Since $G(p)\leq M$ for all~$p$, 
$$
\sum_{\substack{q\leq x\\s > x^{1/3}}}|V(a;q)| \leq \sum_{r\leq
  x^{2/3}}M^{\omega(r)}\sum_{x^{1/3}<s\leq x/r}M^{\omega(s)}.
$$
Applying the Cauchy--Schwarz inequality and~\cite[Lemma 3.2]{ks} to the
inner sum, we find that
\begin{align*}
\sum_{x^{1/3}<s\leq x/r}M^{\omega(s)} &\ll \Bigl(\sum_{s\leq
  x/r}M^{2\omega(s)}\Bigr)^{1/2} \Bigl(\sum_{x^{1/3}<s\leq
  x/r}1\Bigr)^{1/2} \\
  &\ll \frac{x}{r}(\log x)^{(M^2-1)/2}
\exp\Bigl(-\frac{\log( x/r)}{2\log z}\Bigr) \ll 
\frac{x}{r} (\log x)^{(M^2-1)/2-\alpha/6} \ll \frac{x}{r\log x}. 
\end{align*}
Therefore 
$$
\sum_{\substack{q\leq x\\s> x^{1/3}}}|V(a;q)| \ll \frac{x}{\log x}
\sum_{r\leq x^{2/3}}\frac{M^{\omega(r)}}{r}\ll \frac{x}{\log x} \exp\Big( \sum_{z\le p\le x} \frac{M}{p}\Big) \ll \frac{x}{\log x} (\log \log x)^M.
$$
 
 The proof of Theorem~\ref{thm5} is now complete.
 
\section{Exponential sums of polynomials, preliminary results}
\label{sec-prelim}

In this section we collect together some results on the exponential sums $W_f(a;p)$.  We shall use and expand on some of these 
results in later sections.  First we recall the Weil bound:  if $f \in {\Zz}[X]$ has degree $d \ge 1$ and $(a,p)=1$ then 
\begin{equation} 
\label{3.1} 
|W_f(a;p)| \le (d-1). 
\end{equation} 

Next we quote a result from Shao~\cite[Th. 2.1]{shao}. 

\begin{lemma} \label{lem3.1} Let $f\in \Zz[X]$ be a polynomial of degree $d$.  Let $\kappa$ denote the number of irreducible factors of $f(X)-f(Y) \in {\Qq}[X,Y]$.  Then $\kappa \le \tau(d)$ (the number of divisors of $d$) and for large $x$ we have 
$$ 
\sum_{p\le x} \frac{1}{p} \Big( \frac 1p \sum_{(a,p)=1} |W(a;p)|^2 \Big) = (\kappa -1) \log \log x + O(1). 
$$ 
\end{lemma}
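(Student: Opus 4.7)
The plan is to reduce the second moment over $a$ to a point-count on the plane curve $\{f(X)=f(Y)\}\subset\Aa^2$ and then analyze its $\Qq$-irreducible components via Lang--Weil and Chebotarev.

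First, opening the square by orthogonality of additive characters mod $p$ gives
$$
\frac{1}{p}\sum_{(a,p)=1}|W(a;p)|^2 \;=\; \frac{1}{p^2}\sum_{x,y\mods{p}}\sum_{a=1}^{p-1}e\Bigl(\frac{a(f(x)-f(y))}{p}\Bigr) \;=\; \frac{N_p-p}{p},
$$
where $N_p=|\{(x,y)\in\Ff_p^2:f(x)\equiv f(y)\mods{p}\}|$. Factor $f(X)-f(Y)=c\prod_{i=1}^{\kappa}g_i(X,Y)^{e_i}$ into distinct $\Qq$-irreducibles and single out $g_1=X-Y$ (which always appears, with multiplicity one in characteristic zero). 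Since the $g_i$ are pairwise coprime in $\Qq[X,Y]$, the pairwise intersections of the zero loci $V_i=\{g_i=0\}\subset\Aa^2$ are zero-dimensional, and inclusion--exclusion yields
$$
N_p-p=\sum_{i=2}^{\kappa}|V_i(\Ff_p)|+O(1)
$$
uniformly in $p$, outside a finite set of primes of bad reduction.

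Next, for each $\Qq$-irreducible $g_i$, the Lang--Weil theorem applied to the absolutely irreducible components of $V_i$ gives $|V_i(\Ff_p)|=n_i(p)\,p+O(p^{1/2})$, where $n_i(p)$ counts the geometric components of $V_i$ fixed by $\frob_p$. Because these components form a single $\Gal(\bar{\Qq}/\Qq)$-orbit, Burnside's lemma says that $n_i(p)$ has mean $1$ under the Chebotarev measure; an effective form of Chebotarev's theorem, followed by partial summation, then yields $\sum_{p\le x}n_i(p)/p=\log\log x+O(1)$ for each $i\ge 2$. Summing over the $\kappa-1$ non-diagonal factors and absorbing the $O(p^{-1/2})$ Weil error (which is summable against $1/p$) gives
$$
\sum_{p\le x}\frac{1}{p}\cdot\frac{N_p-p}{p}=(\kappa-1)\log\log x+O(1),
$$
as required.

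For the bound $\kappa\le \tau(d)$, I would invoke classical decomposition theory: by Lüroth's theorem the $\Qq$-irreducible factors of $f(X)-f(Y)$ correspond to intermediate fields $\Qq(f(X))\subset E\subset\Qq(X)$, equivalently to decompositions $f=g\circ h$ with $[\Qq(X):E]=\deg h$ dividing $d$; a careful count via the action of the geometric monodromy group of $f$ on its $d$ sheets then caps the number of such fields by $\tau(d)$. The main obstacle in the whole argument is the Chebotarev step for factors that are $\Qq$-irreducible but geometrically reducible: one needs an \emph{effective} Chebotarev bound with a genuinely power-saving error term so that, after partial summation against $1/p$, the oscillations of $n_i(p)$ around its mean produce only $O(1)$ rather than a secondary $\log\log x$ contribution.
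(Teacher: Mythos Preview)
The paper does not actually prove this lemma: it quotes Shao~\cite[Th.~2.1]{shao} for the asymptotic and points to the remark following that theorem for the bound $\kappa\le\tau(d)$. Your write-up is therefore an attempted reconstruction of what underlies that citation.

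For the asymptotic, your reduction is correct and is essentially what one expects Shao's argument to be: open the square, count points on the curve $f(X)=f(Y)$, split into $\Qq$-irreducible components, and apply Lang--Weil to each. Your stated ``main obstacle'' is not one. No power-saving form of Chebotarev is required. The quantity $n_i(p)-1$ is a $\Zz$-linear combination of non-trivial irreducible characters of a fixed finite Galois group evaluated at $\frob_p$, and for each such character~$\chi$ the estimate $\sum_{p\le x}\chi(\frob_p)/p=O(1)$ is the Mertens-type statement that follows from the holomorphy and non-vanishing of the Artin $L$-function $L(s,\chi)$ at $s=1$ (equivalently, from the classical zero-free region for the Dedekind zeta function of the splitting field). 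This is standard and far weaker than a power saving; after that, partial summation against $1/p$ gives $O(1)$ immediately.

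There is, however, a real gap in your argument for $\kappa\le\tau(d)$. The $\Qq$-irreducible factors of $f(X)-f(Y)$ correspond to orbits of a point-stabiliser $H$ on the coset space $G/H$ (equivalently, to $(H,H)$-double cosets in the arithmetic monodromy group $G$), \emph{not} to intermediate fields $\Qq(f(X))\subset E\subset\Qq(X)$ (which correspond to subgroups $H\le K\le G$). These two sets are different in general, so the L\"uroth/decomposition route does not yield the bound. The standard argument is local at infinity: after scaling so that $f$ is monic, the roots of $f(X)=f(Y)$ over $\Qq((1/Y))$ have expansions $\beta_j=\zeta^jY+O(1)$ with $\zeta$ a primitive $d$-th root of unity, and $\Gal(\Qq(\zeta)/\Qq)\cong(\Zz/d\Zz)^\times$ permutes them by $\beta_j\mapsto\beta_{jk}$. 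The orbits of $(\Zz/d\Zz)^\times$ acting by multiplication on $\Zz/d\Zz$ number exactly $\tau(d)$, so $f(X)-f(Y)$ has exactly $\tau(d)$ irreducible factors over the completion $\Qq((1/Y))$; the factorisation over $\Qq(Y)$ can only be coarser, whence $\kappa\le\tau(d)$.
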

\begin{proof}  The asymptotic for the sum over primes is given in Theorem 2.1 of Shao~\cite{shao}, and the bound on $\kappa$ is described in the remark after Theorem 2.1 there. 
\end{proof}

While Lemma \ref{lem3.1} involves the factorization of $F(X,Y)= (f(X)-f(Y))/(X-Y)$ in ${\Qq}[X,Y]$, it is of greater significance to understand the factorization of $F(X,Y)$ over $\bar{\Qq}[X,Y]$ (or equivalently over $\Cc[X,Y]$).  

\begin{lemma} \label{lem3.2} Let $f\in \Zz[X]$ be a polynomial of
  degree $d$, and suppose that the polynomial
  $F(X,Y) = (f(X)-f(Y))/(X-Y)$ factors into $m$ irreducible factors
  over $\bar{{\Qq}}[X,Y]$.  If $m=1$ then for all $p$ we have
$$ 
\frac 1p \sum_{a\in \Ff_p^{\times}} |W(a;p)|^2 = 1+ O(p^{-1/2}). 
$$
If $m>1$, then there is  a set of primes ${\mathcal P}$ of density $\ge \delta >0$ (with $\delta$ depending only on the degree $d$) such that for $p\in {\mathcal P}$ 
$$ 
\frac 1p \sum_{a\in \Ff_p^\times} |W(a;p)|^2 = m + O(p^{-1/2}). 
$$ 
\end{lemma}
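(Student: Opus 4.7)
The plan is to interpret $\sum_a |W(a;p)|^2$ as a point-count on the affine curve defined by $F$, apply the Lang--Weil estimate to each absolutely irreducible factor of $F$, and then use Chebotarev's density theorem to pin down the primes where every factor is individually defined over $\Ff_p$.

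The first step is to open the square by writing
$$
|W(a;p)|^2 = \frac{1}{p}\sum_{x,y \mods p} e\Bigl(\frac{a(f(x)-f(y))}{p}\Bigr),
$$
and summing over all $a\in\Ff_p$ by additive orthogonality, which gives
$$
\sum_{a=0}^{p-1}|W(a;p)|^2 = \#\{(x,y)\in\Ff_p^2 : f(x)=f(y)\}.
$$
Removing the contribution $|W(0;p)|^2=p$ and using $f(X)-f(Y)=(X-Y)F(X,Y)$, one obtains
$$
\sum_{a\in\Ff_p^\times}|W(a;p)|^2 = \#Z(F)(\Ff_p) + O(1),
$$
where $Z(F)\subset \Aa^2$ denotes the vanishing locus of $F$ and the $O(1)$ term accounts for the overlap of $Z(F)$ with the diagonal (which is the zero set of $f'$, a set of size $\leq d-1$).

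Next, decompose $F = F_1\cdots F_m$ with each $F_i\in\bar\Qq[X,Y]$ absolutely irreducible. The set $\{F_1,\ldots,F_m\}$ is permuted by $\Gal(\bar\Qq/\Qq)$, and this action factors through a finite quotient $H$ that embeds in $\mathfrak{S}_m$; by Lemma~\ref{lem3.1}, $m\leq \tau(d)$, so $|H|$ is bounded in terms of $d$ alone. For all but finitely many primes $p$ (those of good reduction with respect to any fixed number field $K$ over which the $F_i$ are defined), the reductions of the $F_i$ modulo $p$ remain absolutely irreducible and pairwise coprime, and a given factor is defined over $\Ff_p$ precisely when $\frob_p\in H$ fixes it. Applying Lang--Weil to each such fixed factor, and observing that the non-fixed factors contribute only $O(1)$ to $\#Z(F)(\Ff_p)$ (since points defined over $\Ff_p$ in a non-trivial Galois orbit of factors must lie in the intersection of at least two distinct absolutely irreducible curves), one arrives at
$$
\#Z(F)(\Ff_p) = k(p)\cdot p + O(p^{1/2}),
$$
where $k(p)$ is the number of $F_i$ fixed by $\frob_p$ and the implied constant depends only on $d$.

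If $m=1$ then $k(p)=1$ for all sufficiently large $p$, yielding the first statement (the finitely many small primes being absorbed in the implied constant). If $m>1$, Chebotarev's density theorem produces a set of primes of density $1/|H|\geq 1/\tau(d)!$ on which $\frob_p$ acts trivially on $\{F_1,\ldots,F_m\}$, so $k(p)=m$ on this set, giving the second statement with $\delta$ depending only on $d$. The main technical point to verify is that the factorization of $F$ over $\bar\Qq$ descends correctly to $\bar\Ff_p$: one must choose a Galois extension $K/\Qq$ over which all $F_i$ are defined and check that for all primes $p$ unramified in $K$ outside a finite exceptional set, each $F_i$ reduces to an absolutely irreducible polynomial over $\bar\Ff_p$. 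This is standard, and the excluded primes contribute only to the implied constants.
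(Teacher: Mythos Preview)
Your proof is correct and follows essentially the same approach as the paper's: both reduce the second moment to the $\Ff_p$-point count on the curve $F(X,Y)=0$ via orthogonality, and both invoke Chebotarev to locate a positive-density set of primes (of density bounded below in terms of $d$) at which all $m$ absolutely irreducible factors are defined over $\Ff_p$, then apply the Riemann Hypothesis for curves (your ``Lang--Weil'') to each component. Your treatment is slightly more detailed in that you also justify the general formula $\#Z(F)(\Ff_p)=k(p)\,p+O(p^{1/2})$ for arbitrary $p$, whereas the paper only needs the split case; but the underlying argument is the same.
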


\begin{proof}
  If $m=1$ then the affine curve with equation $F(X,Y)=0$ is
  geometrically irreducible over ${\Qq}$, so that for all large $p$ it
  is geometrically irreducible over $\Ff_p$.  Orthogonality of
  characters and the Riemann Hypothesis for curves over finite fields
  then show that
  $$ 
  \frac 1p \sum_{a\in \Ff_p^{\times}} |W(a;p)|^2 = \frac 1p \Big|\Big\{
  (x,y)\in \Ff_p^2: F(x,y) = 0 \Big\} \Big| = 1 + O(p^{-1/2}).
  $$  
  
  Now suppose $m>1$, and let $K$ be a finite Galois extension of $\Qq$
  such that $F(X,Y)$ factors in $K[X,Y]$ into $m$ different factors,
  each of which is irreducible in $\bar{\Qq}[X,Y]$.  Thus the
  affine curve defined by $F(X,Y)$ is the union of $m$ geometrically
  irreducible curves over $K$.  Note that the degree of the field $K$
  may be bounded in terms of $d$.  We take ${\mathcal P}$ to be the set
  of primes splitting completely in $K$.  By the Chebotarev density
  theorem ${\mathcal P}$ has density $1/[K:\Qq]$, which is bounded away
  from $0$ by an amount depending only on $d$.  For $p\in {\mathcal P}$,
  the $m$ geometrically irreducible components of the curve $F(X,Y)=0$
  are defined over $\Ff_p$, and the Riemann Hypothesis gives here
$$ 
\frac 1p \sum_{a\in \Ff_p^\times} |W(a;p)|^2 = m + O(p^{-1/2}). 
$$ 
\end{proof} 

Our next result is due to Fried~\cite[Th. 1]{fried} (see also the more
elementary account by Turnwald in~\cite[Th. 1]{turn}).  It describes
when the polynomial $F(X,Y) = (f(X)-f(Y))/(X-Y)$ is absolutely
irreducible, i.e., when $m=1$ in the notation of the previous lemma, and
therefore $\kappa=2$ in the notation of Lemma \ref{lem3.1}.

We recall that for any integer~$d\geq 0$, the Dickson
polynomial~$D_d\in \Zz[X,a]$ is defined to be the unique polynomial
such that
$$
D_d(X+aX^{-1},a)=X^d+(a/X)^{d}
$$
(see, e.g.,~\cite[\S 1]{turn}); in particular,
$D_d(X,0)=X^d$.

\begin{proposition}[Fried]\label{pr-fried}
  Let~$f\in\Zz[X]$ with degree~$d\geq 1$ and let
  $$
  F=(f(X)-f(Y))/(X-Y)\in\Qq[X,Y].
  $$
  \par
  \emph{(1)} If~$\deg(f)$ is not an odd prime, then~$F$ is absolutely
  irreducible if and only if~$f$ is indecomposable in~$\Qq[X]$.
  \par
  \emph{(2)} If~$d$ is an odd prime~$\geq 5$, then~$F$ is absolutely
  irreducible if it is not
  linearly equivalent in~$\Qq[X]$ to a Dickson polynomial $D_d(X,a)$.
  \par
  \emph{(3)} If~$d=3$, then~$F$ is absolutely irreducible if and only
  if~$f$ 
  is not linearly equivalent in~$\Qq[X]$ to a Dickson polynomial $D_3(X,0)$.
\end{proposition}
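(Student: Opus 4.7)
The plan is to translate the question of absolute irreducibility of $F$ into a Galois-theoretic statement about the monodromy group of $f$, then invoke the classification of primitive permutation groups that arise as monodromy groups of polynomial covers in characteristic zero.

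First I would set up the following dictionary. Let $t$ be transcendental over $\bar{\Qq}$ and let $G \subset S_d$ be the Galois group (equivalently, the geometric monodromy group) of $f(X) - t$ over $\bar{\Qq}(t)$, viewed as a permutation group on the $d$ roots $\alpha_1(t),\dots,\alpha_d(t)$. Specializing $t = f(Y)$ with $Y$ transcendental gives the factorization
$$
f(X) - f(Y) = c \prod_{i=1}^{d}(X - \alpha_i)
$$
in $\bar{\Qq}(Y)[X]$, where one root (say $\alpha_1$) equals $Y$. The irreducible factors of $F(X,Y) = (f(X)-f(Y))/(X-Y)$ over $\bar{\Qq}$ correspond bijectively to the orbits of the stabilizer $G_1 \le G$ of the point $\alpha_1$ on the set $\{\alpha_2,\dots,\alpha_d\}$. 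Hence $F$ is absolutely irreducible if and only if $G$ acts $2$-transitively on the $d$ roots.

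Next I would establish that $f$ is decomposable over $\Qq$ if and only if $G$ is imprimitive. For one direction, if $f = g\circ h$ with $\deg(g),\deg(h) \geq 2$, then the roots of $f(X)=t$ partition into blocks according to their image under $h$: two roots lie in the same block iff they have the same image in the roots of $g(Y) = t$. For the converse, a non-trivial block system for $G$ gives an intermediate field between $\bar{\Qq}(t)$ and $\bar{\Qq}(X)$; by Lüroth's theorem this is of the form $\bar{\Qq}(h(X))$ for some $h$, and one checks $t = g(h(X))$ for a suitable $g$, with the decomposition descending to $\Qq$ by an appropriate Galois-theoretic argument (this is where Fried's geometric monodromy/arithmetic monodromy analysis enters to get the decomposition over $\Qq$ itself, not merely $\bar{\Qq}$). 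In particular, $f$ indecomposable over $\Qq$ implies $G$ primitive.

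The main step is then the classification of primitive but not $2$-transitive monodromy groups of polynomial covers of $\Pp^1$ over $\bar{\Qq}$. Here I would invoke the classical theorem (Schur, sharpened by Burnside and used in the polynomial context by Ritt): a primitive permutation group of composite degree that fails to be $2$-transitive cannot be realized as the monodromy group of a polynomial cover in characteristic zero, because the ramification over $\infty$ is totally ramified (coming from $X \mapsto f(X)$ near $\infty$), forcing the group to contain a $d$-cycle, and a primitive group of composite degree containing a long cycle must be $2$-transitive. Hence if $G$ is primitive and not $2$-transitive, then $d$ is prime, and the Schur--Burnside theorem further forces $G$ to be solvable, contained in the affine group $\mathrm{AGL}_1(\Ff_d)$. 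The polynomial covers realizing such a $G$ are, up to linear equivalence, exactly $X^d$ and the Dickson polynomials $D_d(X,a)$: this follows by another application of Lüroth and by matching the ramification data (the critical values and multiplicities) with those of the known Dickson models. This yields parts (1) and (2).

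For part (3) with $d=3$, the only prime-degree exception is $d=3$ itself, and I would check directly that the discriminant of $D_3(X,a) - t = X^3 - 3aX - t$ is $27(4a^3 - t^2)$, which is a square in $\bar{\Qq}(t)$ iff $a=0$. Therefore the Galois group is $C_3$ (not $2$-transitive) precisely when $a=0$, i.e.\ when $f$ is linearly equivalent to $D_3(X,0)=X^3$; otherwise $G = S_3$, which is $2$-transitive, and $F$ is absolutely irreducible. This refinement distinguishes the cubic case from the generic odd-prime case. The main obstacle is the classification invoked in the previous paragraph, namely the combination of the Schur--Burnside theorem on primitive groups containing a cycle together with the Lüroth/ramification argument that pins down the relevant polynomials as Dickson polynomials; the rest is essentially bookkeeping.
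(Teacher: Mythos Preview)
The paper does not give a proof of this proposition at all: it is stated as a result of Fried~\cite[Th.~1]{fried}, with a pointer to Turnwald~\cite[Th.~1]{turn} for a more elementary account, and then used as a black box. Your sketch is essentially the argument that appears in those references: the dictionary between absolute irreducibility of $F$ and $2$-transitivity of the geometric monodromy group, the Ritt/Fried--MacRae correspondence between (in)decomposability and (im)primitivity, Schur's theorem that a primitive group of composite degree containing a full cycle is $2$-transitive, Burnside's theorem in the prime-degree case, and the identification of the solvable-monodromy polynomials with Dickson polynomials. So your approach is correct and coincides with the cited proofs.

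Two small points worth tightening. First, the equivalence ``$f$ indecomposable over $\Qq$ $\Leftrightarrow$ geometric monodromy primitive'' really passes through the fact (Fried--MacRae, or as in Turnwald) that in characteristic~$0$ a polynomial indecomposable over $\Qq$ remains indecomposable over $\bar{\Qq}$; you allude to this, but it is the genuine content behind descending from $\bar{\Qq}$ to $\Qq$, not a monodromy comparison per se. Second, in parts~(2) and~(3) the statement asserts linear equivalence \emph{over $\Qq$} to a Dickson polynomial, whereas the monodromy/ramification argument naturally produces linear equivalence over $\bar{\Qq}$; the descent to $\Qq$ (with a possibly different parameter $a$) is another step handled in Turnwald's treatment that you should cite rather than leave implicit.
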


Putting Lemmas \ref{lem3.1}, \ref{lem3.2} and Proposition \ref{pr-fried} together, we arrive at the following corollary. 

\begin{corollary}\label{cor3.4}  Let $f\in \Zz[X]$ be a polynomial of
  degree $d\geq 1$.  If $f$ is indecomposable then for large $x$ we have
  $$ 
  \sum_{p\le x} \frac 1p \Big( \frac 1p \sum_{a\in \Ff_p^\times} |W(a;p)|^2 \Big) = \log \log x+ O(1), 
  $$ 
  whereas if $f$ is decomposable then for large $x$ we have 
  $$ 
  \sum_{p\le x} \frac 1p \Big( \frac 1p \sum_{a\in \Ff_p^\times} |W(a;p)|^2 \Big) \ge 2 \log \log x+ O(1).
  $$ 
\end{corollary}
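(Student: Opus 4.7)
The plan is to compute the coefficient $\kappa$ appearing in Lemma~\ref{lem3.1} in each of the two cases. Writing $f(X) - f(Y) = (X-Y)\,F(X,Y)$ with $F(X,Y) = (f(X)-f(Y))/(X-Y)$, the linear factor $X-Y$ is a simple $\Qq$-irreducible factor of $f(X) - f(Y)$ (the vanishing along the diagonal is of order one, since $f(X)-f(Y) = (X-Y)f'(Y) + O((X-Y)^2)$ and $f'\not\equiv 0$), so $\kappa - 1$ equals the number of distinct $\Qq$-irreducible factors of $F$. Lemma~\ref{lem3.1} then reads
$$\sum_{p\le x} \frac 1p \Big( \frac 1p \sum_{a\in \Ff_p^\times} |W(a;p)|^2 \Big) = (\kappa - 1)\log\log x + O(1),$$
so the corollary reduces to showing that $\kappa - 1 = 1$ when $f$ is indecomposable and $\kappa - 1 \ge 2$ when $f$ is decomposable.

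For the indecomposable case, I would invoke Proposition~\ref{pr-fried}. When $\deg f$ is not an odd prime, indecomposability forces $F$ to be absolutely irreducible, hence $\Qq$-irreducible, so $\kappa = 2$; and when $\deg f$ is an odd prime with $f$ not $\Qq$-linearly equivalent to any Dickson polynomial, parts (2) and (3) give the same conclusion. The main obstacle is the residual case where $\deg f = d$ is an odd prime and $f$ is $\Qq$-linearly equivalent to some $D_d(X,a)$: here Proposition~\ref{pr-fried} does not directly yield $\Qq$-irreducibility of $F$. I would handle this by direct computation using the parametrization $D_d(X + a/X, a) = X^d + (a/X)^d$, which factors $F$ explicitly into two Galois-conjugate pieces over a quadratic extension that do not individually descend to $\Qq$. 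The toy case $D_3(X,0) = X^3$ is illustrative: $F = X^2 + XY + Y^2$ is $\Qq$-irreducible and splits only over $\Qq(\sqrt{-3})$. Since $\Qq$-irreducibility of $F$ is preserved under $\Qq$-linear equivalence of $f$, this gives $\kappa = 2$ for all indecomposable $f$.

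For the decomposable case, I would write $f = g \circ h$ with $\deg g, \deg h \ge 2$ and use the factorization
$$f(X) - f(Y) = (X - Y) \cdot H(X,Y) \cdot G(h(X), h(Y)),$$
where $H(X,Y) = (h(X) - h(Y))/(X-Y)$ and $G(U,V) = (g(U) - g(V))/(U-V)$. Both $H$ and $G(h(X), h(Y))$ lie in $\Qq[X,Y]$ and have positive degree, so each contributes at least one $\Qq$-irreducible factor of $F$. Moreover their vanishing loci are disjoint: $H = 0$ forces $h(X) = h(Y)$ with $X \ne Y$, while $G(h(X), h(Y)) = 0$ forces $h(X) \ne h(Y)$; equivalently, the restriction of $G(h(X), h(Y))$ to $\{H = 0\}$ equals $g'(h(X))$, which is not identically zero. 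Hence the two polynomials are coprime in $\Qq[X,Y]$ and produce at least two distinct $\Qq$-irreducible factors of $F$. Therefore $\kappa \ge 3$, and Lemma~\ref{lem3.1} yields the desired lower bound $2\log\log x + O(1)$.
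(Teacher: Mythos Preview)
Your decomposable case matches the paper's argument, and your extra care in checking that $H(X,Y)$ and $G(h(X),h(Y))$ are coprime (via the restriction $G(h(X),h(Y))|_{\{H=0\}} = g'(h(X))$) is a welcome justification that the paper leaves implicit.

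The gap is in your treatment of odd prime degree. Your claim that for a Dickson polynomial $D_d(X,a)$ the polynomial $F$ ``factors into two Galois-conjugate pieces over a quadratic extension'' is incorrect once $d\ge 7$: for $a\ne 0$ one has
\[
F(X,Y)=\prod_{j=1}^{(d-1)/2}\bigl(X^2-(\zeta_d^j+\zeta_d^{-j})XY+Y^2-(\zeta_d^j-\zeta_d^{-j})^2 a\bigr),
\]
a product of $(d-1)/2$ absolutely irreducible quadratics defined over $\Qq(\zeta_d+\zeta_d^{-1})$, which has degree $(d-1)/2$ over~$\Qq$, not~$2$. (For $a=0$ there are $d-1$ linear factors over $\Qq(\zeta_d)$.) Even when a conjugate two-piece splitting exists, the implication you want fails: a factorization $F=P\cdot\sigma(P)$ over a quadratic extension with $P\notin\Qq[X,Y]$ does \emph{not} force $\Qq$-irreducibility of $F$ (consider $(X^2+1)(X^2+4)$ over $\Qq(i)$). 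One can rescue the argument by showing the Galois action on the absolutely irreducible factors is transitive, but this is more work than you indicate.

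The paper bypasses all of this. For $d$ prime, Lemma~\ref{lem3.1} already records the bound $\kappa\le\tau(d)=2$; since $\deg F=d-1\ge 1$ forces $\kappa\ge 2$, one gets $\kappa=2$ immediately, with no need to invoke Proposition~\ref{pr-fried} or to analyze Dickson polynomials at all. The paper then reserves Fried's result for the remaining case of composite $d$ with $f$ indecomposable, where part~(1) applies directly. You should replace your odd-prime-degree analysis with this one-line appeal to $\kappa\le\tau(d)$.
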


\begin{proof}
  If $d$ is prime, then $\kappa$ must be $2= \tau(d)$ in Lemma
  \ref{lem3.1}.  Moreover, $f$ is automatically indecomposable, and so
  the stated result holds in this case.  If $f= g\circ h$ is
  decomposable, then $f(X)-f(Y)$ has $(X-Y)$, $(h(X)-h(Y))/(X-Y)$ and
  $(g(h(X))-g(h(Y)))/(h(X)-h(Y))$ as factors, so that $\kappa \ge 3$
  in Lemma \ref{lem3.1} and the stated result holds.  Finally if the
  degree $d$ is composite and $f$ is indecomposable, then the first
  part of Proposition \ref{pr-fried} shows that $(f(X)-f(Y))/(X-Y)$ is
  irreducible in $\bar{\Qq}[X,Y]$ and therefore in $\Qq[X,Y]$. Either
  Lemma \ref{lem3.1} or Lemma \ref{lem3.2} now gives the stated
  result.
\end{proof}

Lastly we consider the behavior of $W(a;p)$ when $f$ is assumed to be Sidon--Morse over ${\Qq}$.  Here the work of Katz permits a very precise understanding of such exponential sums. 

\begin{proposition}\label{prop3.5}
  Let $f\in \Zz[X]$ be a polynomial of degree $d$, and suppose that $f$
  is Sidon--Morse over $\Qq$.  Let $K_d$ denote the compact group
  $USp_{d-1}(\Cc)$ if $f$ is symmetric Sidon--Morse, and the compact
  group $SU_{d-1} (\Cc)$ if $f$ is Sidon--Morse but not symmetric.  For
  any integer $k \ge 0$ we have
  $$ 
  \lim_{p\to+\infty}\frac{1}{p} \sum_{a\in\Ff_p^{\times}}|W(a;p)|^{2k}=
  \int_{K_d}|\Tr(g)|^{2k}d\mu(g),
  $$
  where $\mu$ is the Haar measure on $K_d$ normalized to have total
  volume $1$.  Furthermore
 \begin{equation*} 
 \int_{USp_{d-1}} |\Tr(g)|^{2k}d\mu (g) 
 \begin{cases} 
 =(2k-1)!! &\text{ for } 1\le k\le (d-1)/2\\ 
 \le (2k-1)!! &\text{ for all } k\ge 1.
 \end{cases} 
 \end{equation*} 
and 
 \begin{equation*} 
 \int_{SU_{d-1}} |\Tr(g)|^{2k}d\mu (g) 
 \begin{cases} 
 =k! &\text{ for } 0\le k\le (d-1)\\ 
 \le k! &\text{ for all } k\ge 0, 
 \end{cases} 
 \end{equation*} 
\end{proposition}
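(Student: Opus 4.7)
The plan is to combine Katz's determination of the monodromy of the Fourier transform sheaf attached to $f$ with Deligne's equidistribution theorem, and then evaluate the resulting Haar integrals by classical invariant theory.

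For each sufficiently large prime $p$, I would form the sheaf $\mathcal{F}_p = \ft_\psi(\mathcal{L}_{\psi(f(x))})$ on $\Aa^1/\Ff_p$, where $\psi$ is a nontrivial additive character of $\Ff_p$. Because $f$ is Morse, standard sheaf-theoretic analysis (Katz, ESDE, Ch.\,7) shows that for all but finitely many $p$, $\mathcal{F}_p$ is a geometrically irreducible middle-extension sheaf, lisse of rank $d-1$ on a dense open $U_p \subset \Aa^1$, pure of weight zero, with trace function $\Tr(\frob_a \mid \mathcal{F}_p) = W_f(a;p)$ for $a \in U_p(\Ff_p)$. The decisive input is Katz's computation of the geometric monodromy group of $\mathcal{F}_p$ in its standard $(d-1)$-dimensional representation $V$: under the Sidon--Morse hypothesis, the proof of Theorem~7.10.6 of ESDE yields $G_\mathrm{geom} = \SL_{d-1}$ in the non-symmetric case; when $f$ is linearly equivalent to an odd polynomial whose critical values form a symmetric Sidon set, the sheaf carries a symplectic autoduality and $G_\mathrm{geom} = \Sp_{d-1}$. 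Purity of weight zero forces the arithmetic monodromy to coincide with the geometric one, so Deligne's equidistribution theorem implies that the Frobenius conjugacy classes $\theta_{a,p}$ become equidistributed in the space of conjugacy classes of the maximal compact $K_d \in \{\SU_{d-1},\USp_{d-1}\}$ of $G_\mathrm{geom}(\Cc)$, with respect to the pushforward of Haar measure. Integrating the continuous class function $g \mapsto |\Tr(g)|^{2k}$ then yields the first assertion, the points of $\Ff_p^\times$ lying outside $U_p(\Ff_p)$ contributing $O(p^{-1})$ to the average.

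For the explicit evaluation of the integrals, I would use
$$
\int_{K_d} |\Tr(g)|^{2k}\, d\mu(g) \;=\; \dim \End_{K_d}(V^{\otimes k}),
$$
and compute the dimension of this commutant by classical invariant theory. For $K_d = \SU_{d-1}$, Schur--Weyl duality presents $\End_{K_d}(V^{\otimes k})$ as the image of the natural map $\Cc[S_k] \to \End(V^{\otimes k})$; this image has dimension at most $k!$, with equality once $k \le d-1$. For $K_d = \USp_{d-1}$, the analogous role is played by the Brauer algebra, whose image in $\End_{K_d}(V^{\otimes k})$ has dimension at most the number of perfect matchings of $\{1,\ldots,2k\}$, that is $(2k-1)!!$, with equality in the stable range $k \le (d-1)/2$. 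The main obstacle throughout is Katz's monodromy computation, invoked here as a black box; its symmetric version is particularly delicate, and it is precisely this delicacy that compels us (as already flagged in the remark following Definition~\ref{def-generic}) to require linear equivalence with an odd polynomial rather than mere symmetry of the critical values.
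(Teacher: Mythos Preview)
Your strategy is the paper's: invoke Katz's monodromy computation (packaged here as Theorem~\ref{th-connect}), deduce the moment asymptotics via the Riemann Hypothesis, and evaluate the Haar integrals. The paper cites Diaconis--Shahshahani and Perret-Gentil for the latter; your Schur--Weyl/Brauer argument is exactly what underlies those references.

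Two technical points need repair. First, the sheaf $\ft_\psi(\mcL_{\psi(f)})$ does not have trace function $W_f(a;p)$: its Frobenius trace at $a$ is $p^{-1/2}\sum_x\psi(f(x)+ax)$, not $p^{-1/2}\sum_x\psi(af(x))$. The sheaf you want is the Fourier transform of $f_*\bQl/\bQl$ (as in Section~\ref{sec-katz}); Katz's theorems in ESDE Chapter~7 are stated for this object. Second, purity of weight zero alone does not force $G_{\mathrm{arith}}=G_{\mathrm{geom}}$; one only gets $G_{\mathrm{arith}}\subset Z\cdot G_{\mathrm{geom}}$ once the normalizer of $\SL_{d-1}$ (resp.\ $\Sp_{d-1}$) in $\GL_{d-1}$ is identified as scalar multiples. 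The paper resolves this by passing to the normalized sheaf $\mctF_f$ (twist by $\mcL_2^{d-1}$ and a critical shift), which trivializes the determinant so that $G_{\mathrm{arith}}=G_{\mathrm{geom}}$ genuinely holds. Alternatively, since your test function $g\mapsto|\Tr(g)|^{2k}$ is invariant under the center $Z$, the discrepancy is harmless for this particular computation---but this should be said rather than glossed over.
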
 
\begin{proof} This is largely a consequence of the work of Katz~\cite{katz-esde}.  We recall Katz's work in 
Theorem~\ref{th-connect} below, and explain the link to the moments over $K_d$ in Remark~\ref{rm-35}.  Further discussion of Katz's theorem  may be found in Section~\ref{sec-remark}.   

The moments over $K_d$ for small $k$
(which match the moments of a standard complex Gaussian for $K_d= SU_{d-1}$, and the moments of a standard real Gaussian for $K_d=USp_{d-1}$) were computed by Diaconis and Shahshahani, and the upper bounds for all $k$ may be found in the work of Perret-Gentil~\cite[Prop. 2.2]{perret}.

\end{proof} 

\section{Proof of Theorem \ref{th-main}}
\label{sec-main}

We begin with the first part of the theorem, which seeks a bound for $\sum_{q\le x} |W(a;q)|^2$.   We apply
Theorem~\ref{thm5} to the function $q\mapsto W(a;q)^2$, which is
twisted-multiplicative. The Weil bound \eqref{3.1} allows us to take $G(p)=(d-1)^2$ for all but finitely
many primes.  Writing 
$$
g(p)=\frac{1}{p}\sum_{(a,p)=1}|W(a;p)|^2, 
$$
and recalling that $f$ is indecomposable, Corollary \ref{cor3.4} gives 
$$ 
\sum_{p\le x} \frac{g(p)}{p} = \log \log x +O(1). 
$$  
Theorem~\ref{thm5} yields 
$$ 
\sum_{q\le x} |W(a;q)|^2 \ll \frac{x}{\log x} \exp\Big( \sum_{p\le x}\frac{g(p)}{p}\Big) (\log \log x)^{(d-1)^2} 
\ll x  (\log \log x)^{(d-1)^2}. 
$$ 
 
Now we turn to the proof of the second part of the theorem, which we
will deduce from Theorem \ref{thm5} and Theorem \ref{th-4} (to be proved
in Section~\ref{sec-katz}).  Applying Theorem \ref{thm5} to the twisted
multiplicative function $|W(a;q)|$ and using the Weil bound (which permits $M=d-1$ here) we obtain
\begin{equation}
  \label{4.1} 
  \sum_{q\le x} |W(a;q)| \ll \frac{x}{\log x} (\log \log x)^{d-1} \exp\Big(\sum_{p\le x} \frac{1}{p} \Big(\frac 1p \sum_{a\in \Ff_p^\times}|W(a;p)|\Big)\Big). 
\end{equation}
  
Let $\epsilon$ be a small positive number, and let ${\mathcal P}$ denote the set of primes $p$ for which 
$$ 
\frac 1p \sum_{a\in \Ff_p^\times}|W(a;p)| \ge 2 -\epsilon.
$$ 
By Theorem \ref{th-4} we know that the set ${\mathcal P}$ has density
$\ge \delta =\delta(d) >0$ with $\delta$ depending only on $d$.  For any
real number $y$ with $|y| \le d-1$ we claim that
$$ 
|y| \le \frac{1+y^2}{2}, \qquad \text{and} \qquad |y|\le \frac{1+y^2}{2} + \frac{3/2-y^4}{200(d-1)^4}.
$$ 
The first inequality is clear, and so is the second inequality in the range $y^4 \le 3/2$.   In the range $3/2 < y^4 \le (d-1)^4$, note that 
$(1+y^2)/2-|y| \ge (1+\sqrt{3/2})/2 - (3/2)^{1/4} > 1/200$, so that the desired inequality holds in this case also.  

Applying the first inequality above for primes $p\notin {\mathcal P}$, we find 
$$ 
\frac 1p \sum_{a\in {\Ff_p^{\times}}} |W(a;p)| \le \frac 12 + \frac{1}{2p} \sum_{a\in \Ff_p^{\times}} |W(a;p)|^2,
$$ 
while applying the second inequality above for primes $p\in {\mathcal P}$ we find 
\begin{align*}
\frac 1p \sum_{a\in {\Ff_p^{\times}}} |W(a;p)| &\le \frac 12 + \frac{1}{2p} \sum_{a\in \Ff_p^{\times}} |W(a;p)|^2 + \frac{1}{200(d-1)^4} \Big( \frac 32- \frac 1p \sum_{a\in \Ff_p^{\times}} |W(a;p)|^4 \Big) \\
&\le \frac 12 + \frac{1}{2p} \sum_{a\in \Ff_p^{\times}} |W(a;p)|^2 -\frac{1}{400 (d-1)^4}. 
\end{align*}
Combining both inequalities, and using the first part of Corollary \ref{cor3.4}, we conclude that 
$$ 
\sum_{p\le x} \frac 1{p^2} \sum_{a\in {\Ff_p^{\times}}} |W(a;p)| \le \Big( \frac 12 + \frac 12 -\frac{\delta}{400(d-1)^4} +o(1) \Big) \log \log x. 
$$ 
Inserting this bound in \eqref{4.1}, the second part of the theorem follows.
 
\section{The fourth moment: Proof of Theorem~\ref{th-4}}\label{sec-katz}

As we shall see, for Sidon--Morse polynomials, the work of
Katz~\cite{katz-esde} can be used to show that Case (1) of
Theorem~\ref{th-4} holds.  The main challenge is to handle all
polynomials of degree~$\geq 3$, and not just the generic ones.  

Let $f\in\Zz[X]$ be a polynomial with~$d=\deg(f)\geq 3$.  If $(f(X)-f(Y))/(X-Y)$ is
not absolutely irreducible, then Lemma~\ref{lem3.2} shows that there is a positive density of primes on which the 
second moment of $W(a;p)$ is at least $2 + O(p^{-1/2})$, so that by Cauchy--Schwarz a stronger form of the second case of Theorem~\ref{th-4} holds (with the fourth moment being $\ge 4+O(p^{-1/2})$).



From now on, we will therefore assume that the polynomial
$$
F(X,Y)=f(X)-f(Y))/(X-Y)
$$
is absolutely irreducible. The remaining part of the proof will use in
an essential way the algebraic interpretation of the exponential sums
$W(a;p)$, which goes back to Weil, and it seems difficult to prove the
lower bound for the fourth moment with a direct elementary argument.

Fix a prime~$\ell$ (for instance $\ell=2$); all primes~$p$ below will be
assumed to be different from~$\ell$ and to be larger than~$d$.
Let~$\iota$ be a fixed isomorphism $\bQl\to\Cc$; we use it identify
$\ell$-adic numbers and complex numbers.

Let~$p\not=\ell$, $p>d$, be a prime number. We denote by $\psi_p$ the
$\ell$-adic additive character of~$\Ff_p$ such that
$$
\iota(\psi_p(a))=e\Bigl(\frac{a}{p}\Bigr)
$$
for~$a\in\Ff_p$.

Let~$\mathcal{G}_p$ be the $\ell$-adic sheaf $f_*\bQl/\bQl$ on the
affine line~$\Aa^1_{\Ff_p}$; it has rank $d-1$ and is everywhere tamely
ramified (since $p>d$). The sheaf $\mathcal{G}_p$ is a Fourier sheaf in
the sense of Katz (\cite[7.3.5]{katz-esde}), and we denote
by~$\mathcal{F}_p$ its (unitarily normalized) Fourier transform with
respect to~$\psi_p$ (defined in~\cite[7.3.3]{katz-esde}, up to the
normalization). The trace function of~$\mathcal{F}_p$ takes value~$0$
for $a=0$ and takes value (after applying $\iota$)
$$
\frac{1}{\sqrt{p}}\sum_{x\in\Ff_p} e\Bigl(\frac{af(x)}{p}\Bigr)=W(a;p)
$$
for~$a\in\Ff_p^{\times}$ (see~\cite[Th. 7.3.8, (4)]{katz-esde}, where
again the Fourier transform is not normalized). The rank
of~$\mathcal{F}_p$ is also equal to $d-1$, and $\mathcal{F}_p$ is
lisse and pure of weight~$0$ outside~$0$ and~$\infty$ (see~\cite[Lemma
7.3.9]{katz-esde}).


\begin{lemma}\label{lm-geom-irred}
  If the polynomial $(f(X)-f(Y))/(X-Y)$ is absolutely irreducible
  over~$\Qq$, then for all $p$ large enough, the sheaf~$\mathcal{F}_p$
  is geometrically ireducible.
\end{lemma}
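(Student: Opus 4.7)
The plan is to factor the problem into two pieces: first, to establish the geometric irreducibility of the source sheaf~$\mathcal{G}_p$; then, to invoke the fact that the naive Fourier transform preserves geometric irreducibility of Fourier sheaves (a foundational theorem of Katz and Laumon, see \cite[Ch.~7]{katz-esde}). Since~$\mathcal{F}_p$ is, up to normalisation, the Fourier transform of~$\mathcal{G}_p$ with respect to~$\psi_p$, everything reduces to showing that~$\mathcal{G}_p$ itself is geometrically irreducible for all~$p$ sufficiently large.

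For this reduced problem, I would use that~$\mathcal{G}_p$ is lisse on the open set $U\subset\Aa^1_{\Ff_p}$ where~$f$ is étale, and that at a geometric generic point of~$U$ its stalk is the standard $(d-1)$-dimensional quotient $\bQl^d/\bQl$ of the permutation representation of the geometric monodromy group $\tilde{G}_p\subset\mathfrak{S}_d$ acting on the~$d$ sheets of~$f$. It is classical that this ``standard representation modulo constants'' is irreducible precisely when~$\tilde{G}_p$ acts $2$-transitively on the~$d$ sheets.

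To match $2$-transitivity of~$\tilde{G}_p$ with the hypothesis, I would first note that absolute irreducibility of $F(X,Y)=(f(X)-f(Y))/(X-Y)$ over~$\Qq$ implies the same over~$\overline{\Ff}_p$ for all but finitely many~$p$, because the factorisation of~$F$ into absolutely irreducible factors is stable under reduction at primes of good reduction. Viewing~$F(X,Y)$ as a polynomial in~$X$ over~$\overline{\Ff}_p(Y)$, its $d-1$ roots are exactly the preimages of~$f(Y)$ other than~$Y$ itself, and the Galois group of its splitting field over~$\overline{\Ff}_p(Y)$ is precisely the stabiliser in~$\tilde{G}_p$ of the distinguished sheet indexed by~$Y$. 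Absolute irreducibility of~$F$ is then equivalent to transitivity of this stabiliser on the remaining~$d-1$ sheets, which is exactly $2$-transitivity of~$\tilde{G}_p$.

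The main obstacle, in my view, is the Galois-theoretic bookkeeping in the last step, translating between absolute irreducibility of~$F$ in $\overline{\Ff}_p[X,Y]$, irreducibility in $\overline{\Ff}_p(Y)[X]$, and $2$-transitivity of~$\tilde{G}_p$; the two outer ingredients (classical irreducibility of permutation representations and preservation of irreducibility under Fourier transform) are then applied as black boxes. A possible alternative bypasses the Fourier-theoretic step altogether: Lemma~\ref{lem3.2} gives $\frac{1}{p}\sum_{a\in\Ff_p^{\times}}|W(a;p)|^2=1+O(p^{-1/2})$ under the hypothesis, while Grothendieck--Lefschetz combined with geometric semisimplicity of the pure weight-$0$ sheaf~$\mathcal{F}_p$ expresses this asymptotic second moment as $\sum_j m_j^2$, where $\mathcal{F}_p\cong\bigoplus_j V_j^{m_j}$ is its isotypic decomposition as a geometric representation; the forced equality $\sum_j m_j^2=1$ then yields geometric irreducibility directly.
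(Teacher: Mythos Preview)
Your alternative sketched at the end is exactly the paper's proof: the authors never touch~$\mathcal{G}_p$, but apply Katz's diophantine criterion for irreducibility directly to~$\mathcal{F}_p$. They use Parseval over~$\Ff_{p^{\nu}}$ to rewrite the normalised count of $\Ff_{p^{\nu}}$-points on the curve $F(X,Y)=0$ as $p^{-\nu}\sum_{a\in\Ff_{p^{\nu}}^{\times}}|W(a;p^{\nu})|^2$, invoke the Riemann Hypothesis for this (geometrically irreducible, for~$p$ large) curve to get limit~$1$ as $\nu\to\infty$, and conclude.

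Your primary approach is also correct but takes a genuinely different route. Passing through the source side and the $2$-transitivity criterion makes the appearance of~$F$ conceptually transparent --- its absolute irreducibility is literally the statement that a point-stabiliser in the covering monodromy of~$f$ acts transitively on the remaining sheets --- and avoids any point-counting over field extensions. The cost is one more black box (preservation of irreducibility under the Fourier transform of Fourier sheaves, \cite[Ch.~7]{katz-esde}) plus the Gauss-lemma bookkeeping you flagged to pass from irreducibility in $\overline{\Ff}_p[X,Y]$ to $\overline{\Ff}_p(Y)[X]$. The paper's route is shorter given that the curve-counting machinery of Lemma~\ref{lem3.2} is already in place; yours is more structural and would adapt more readily to situations where no clean Parseval identity is available.
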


\begin{proof}
  This is Fourier-side variant of Lemma~\ref{lem3.2}.  If the polynomial
  $$
  F(X,Y)=(f(X)-f(Y))/(X-Y)
  $$
  is absolutely irreducible, then the curve $C_{f,p}$ over~$\Ff_p$ with
  equation
  $$
  (f(x)-f(y))/(x-y)=0
  $$
  is geometrically irreducible, which by the Riemann Hypothesis for
  curves implies that as~$\nu\to +\infty$, we have
  $$
  |C_{f,p}(\Ff_{p^{\nu}})|\sim p^{\nu}.
  $$
  But the discrete Parseval formula implies that
  $$
  \frac{1}{p^{\nu}} |C_{f,p}(\Ff_{p^{\nu}})|= \frac{1}{p^{\nu}}
  \sum_{a\in\Ff_{p^{\nu}}^{\times}} \Bigl| \frac{1}{p^{\nu/2}}
  \sum_{x\in\Ff_{p^{\nu}}}e\Bigl(\frac{\Tr(af(x))}{p}\Bigr) \Bigr|^2
  $$
  (with the trace from $\Ff_{p^{\nu}}$ to $\Ff_p$) so we obtain
  $$
  \lim_{\nu\to+\infty} \frac{1}{p^{\nu}}
  \sum_{a\in\Ff_{p^{\nu}}^{\times}} \Bigl| \frac{1}{p^{\nu/2}}
  \sum_{x\in\Ff_{p^{\nu}}}e\Bigl(\frac{\Tr(af(x))}{p}\Bigr)
  \Bigr|^2=1,
  $$
  and this implies that~$\mathcal{F}_p$ is geometrically irreducible by
  Katz's diophantine criterion for irreducibility (see e.g.~\cite[Lemma
  4.14]{kms}).
\end{proof}

We now consider only primes $p$ such that the sheaf~$\mathcal{F}_p$ is
geometrically irreducible.

Let $G_p$ be the arithmetic monodromy group of~$\mathcal{F}_p$ and
$G_p^g$ the geometric monodromy subgroup; we can view these as algebraic
subgroups of $\GL_{d-1}(\bQl)$. The irreducibility property
of~$\mathcal{F}_p$ means that~$G_p^g$ acts irreducibly on~$\bQl^{d-1}$.

By a deep theorem of Deligne (see~\cite[Th. 3.4.1 (iii) and
Cor. 1.3.9]{deligne}), the connected component of the identity
$G_{p,0}^g$ of the group $G_p^g$ is semisimple. It is invariant under
all automorphisms of~$G_p^g$, hence it is a normal subgroup of~$G_p$
(since inner automorphisms of~$G_p$ induce automorphisms of its normal
subgroup~$G_p^g$). Let~$f_p$ denote a fixed element of the conjugacy
class of the Frobenius automorphism at~$p$.
\par
Let~$\mathcal{E}_p$ be the sheaf $\End(\End(\mathcal{F}_p))$. Its
trace function for~$a\in\Ff_p^{\times}$ is $|W(a;p)|^4$.
\par
Let $V_p$ be the subspace $\End(\End(\bQl^{d-1}))^{G_p^g}$ of vectors
invariant under $G_p^g$, the action of~$G_p$ on the space
$\End(\End(\bQl^{d-1}))$ being ``the obvious one'' induced by the
action on~$\bQl^{d-1}$ (if a group~$G$ acts on a vector space~$E$, it
acts on~$\End(E)$ by $g\cdot u=g\circ u\circ g^{-1}$).
\par
Applying the Grothendieck--Lefschetz trace formula and Deligne's
version of the Riemann Hypothesis, we get a formula
\begin{equation}\label{eq-rh}
  \frac{1}{p}\sum_{a\in\Ff_p^{\times}} |W(a;p)|^4
  =\iota(\Tr(f_p|V_p))+O(p^{-1/2})
\end{equation}
where the implied constant depends only on~$d$ (e.g. by conductor
estimates, much as in~\cite[Th. 9.1]{fkm2}).

\begin{proposition}\label{pr-motivic}
  There exists a finite Galois extension~$K$ of~$\Qq$ of degree
  bounded in terms of~$d$ only such that for all but finitely many
  primes~$p$ that are totally split in~$K$, the action of $f_p$
  on~$V_p=\End(\End(\bQl^{d-1}))^{G_p^g}$ is trivial.
\end{proposition}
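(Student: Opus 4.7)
The plan is to globalize the sheaves $\sheaf{F}_p$ into a single $\ell$-adic sheaf over an arithmetic base, and derive the proposition from a universal Galois extension of $\Qq$ attached to the arithmetic-over-geometric monodromy quotient.

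First, I would construct a global model: choose an integer $N$ divisible by $\ell$ and by the primes of bad reduction for $f$, and view $f$ as a morphism $\Aa^1_{\Zz[1/N]}\to\Aa^1_{\Zz[1/N]}$. Then $f_*\bQl/\bQl$ is a lisse sheaf of rank $d-1$ on an open subscheme $U\subset\Aa^1_{\Zz[1/N]}$, and its fiberwise Fourier transform $\sheaf{F}$, defined globally via Laumon's formalism, has fiber at each prime $p\nmid N\ell$ recovering $\sheaf{F}_p$ up to the chosen normalization. Over the generic fiber $U_\Qq$, write $\haut^{\mathrm{arith}}$ and $\haut^{\mathrm{geom}}$ for the arithmetic and geometric monodromy groups of $\sheaf{F}$ --- algebraic subgroups of $\GL_{d-1,\bQl}$ with $\haut^{\mathrm{geom}}$ normal in $\haut^{\mathrm{arith}}$.

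The central step is to show that $\Gamma = \haut^{\mathrm{arith}}/\haut^{\mathrm{geom}}$ acts through a finite group on $V = \End(\End(\bQl^{d-1}))^{\haut^{\mathrm{geom}}}$, with image of order bounded purely in terms of $d$. Purity of $\sheaf{F}$ of weight zero combined with Deligne's theorem forces $\haut^{\mathrm{arith}}$ and $\haut^{\mathrm{geom}}$ to have the same identity component, so $\Gamma$ is a finite component group, whose image in $\GL(V)$ sits inside the component group of an algebraic subgroup of $\GL_{d-1,\bQl}$ preserving the unitary structure coming from purity; a uniform bound in terms of $d$ can then be obtained by exploiting the rank constraint together with the rigidity of the reductive group $\haut^{\mathrm{geom}}$. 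Since the composition $\pi_1^{\mathrm{arith}}(U_\Qq)\to\Gamma\to\GL(V)$ is trivial on $\pi_1^{\mathrm{geom}}(U_{\bar\Qq})$, it factors through $\Gal(\bar\Qq/\Qq)$, yielding a homomorphism $\Gal(\bar\Qq/\Qq)\to\GL(V)$ with finite image of bounded order. Let $K$ be the fixed field of its kernel, a finite Galois extension of $\Qq$ of degree bounded in terms of $d$ alone.

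Finally, specialization arguments for lisse sheaves (as in Katz's theorems on monodromy in families) show that for all but finitely many primes $p\nmid N\ell$, the group $G_p^g$ coincides with the specialization of $\haut^{\mathrm{geom}}$, the group $G_p$ is contained in that of $\haut^{\mathrm{arith}}$, and the image of $f_p$ in $G_p/G_p^g$ equals the image of the arithmetic Frobenius at $p$ under $\Gal(\bar\Qq/\Qq)\to\Gamma$. For $p$ totally split in $K$ this image acts trivially on $V$, hence $f_p$ acts trivially on $V_p = V$. The main obstacle will be the uniform bound $[K:\Qq]\leq C(d)$: although the finiteness of the Galois image is relatively standard, a bound depending only on $d$ requires controlling the component group of a pure weight-zero algebraic subgroup of $\GL_{d-1,\bQl}$ preserving the unitary structure, perhaps by combining Jordan-type bounds for finite linear groups with the irreducibility of $\haut^{\mathrm{geom}}$ on $\bQl^{d-1}$. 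A secondary point is the careful globalization of the Fourier transform over an arithmetic base and the verification of monodromy specialization at good primes.
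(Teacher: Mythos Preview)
Your overall architecture—globalize, obtain a Galois representation of $\Gal(\bar\Qq/\Qq)$ with finite image, take $K$ to be its splitting field—matches the paper's. But two of your steps are not justified as written, and the paper handles them by quite different arguments.

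First, the claim that purity of $\sheaf{F}$ forces $\haut^{\mathrm{arith}}$ and $\haut^{\mathrm{geom}}$ to have the same identity component, so that $\Gamma$ is finite, is not justified: over $\Qq$ the quotient is a priori an arbitrary quotient of $\Gal(\bar\Qq/\Qq)$, and purity alone does not make it finite. What the paper proves instead (its Step~1) is that \emph{each individual Frobenius $f_p$ acts on $V_p$ with finite order}. The mechanism is the outer automorphism group: since $G_{p,0}^g$ acts irreducibly on $\bQl^{d-1}$ (a lemma of Katz), conjugation injects $G_p/(G_{p,0}^g Z\cap G_p)$ into $\mathrm{Out}(G_{p,0}^g)$, which is finite because $G_{p,0}^g$ is connected semisimple; and scalars $Z$ act trivially on $\End(\End(\cdot))$. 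Moreover the order is bounded in terms of $d$ alone, because $G_{p,0}^g$ ranges over only finitely many isomorphism classes of connected semisimple subgroups of $\GL_{d-1}$. This uniform bound on orders is exactly the input your Jordan-type argument needs; you correctly identified Jordan's theorem as the tool, but not the source of the exponent bound.

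Second, and more seriously, your specialization step ``for all but finitely many $p$, the group $G_p^g$ coincides with the specialization of $\haut^{\mathrm{geom}}$'' is itself a deep theorem of Katz that the paper deliberately avoids here (it is invoked only \emph{after} this proposition, for a different purpose). Without it you only get $V\subset V_p$, which is not enough to conclude that $f_p$ acts trivially on all of $V_p$. The paper sidesteps the issue: rather than globalizing $\sheaf{F}_p$ and then passing to invariants, it globalizes the \emph{cohomology group} $V_p\simeq H^2_c(\Gm\times\bFp,\End(\End(\sheaf{F}_p)))(1)$ directly, via the fourfold phase $g(x,y,z,w)=f(x)+f(y)-f(z)-f(w)$ and proper base change. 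This yields a constructible sheaf on $\spec(\Zz[1/N\ell])$ whose stalk at each $p$ is literally $V_p$ with its Frobenius action, with no comparison of $G_p^g$ across primes required. Finiteness of the global Galois image then follows from the finite-order property above together with the no-small-subgroups property of $\ell$-adic Lie groups, and the degree bound from Jordan's theorem applied with the uniform exponent bound.
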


Let us admit this proposition and conclude the proof of
Theorem~\ref{th-4}. For primes totally split in the number field~$K$,
we have $\iota(\Tr(f_p|V_p))=\dim(V_p)$.  On the other hand, the
definition of the action of $G_p^g$ on $\End(\bQl^{d-1})$ shows that
the space~$V_p$ is the space of all linear
maps~$\End(\bQl^{d-1})\to \End(\bQl^{d-1})$ which commute with the
$G_p^g$-action.  The identity is an element of this space, so its
dimension is $\geq 1$. Since the action on $\End(\bQl^{d-1})$ is
semisimple (e.g. by Deligne's Theorem~\cite[Th. 3.4.1]{deligne}
because it is still pure of weight~$0$), Schur's Lemma in
representation theory (see, e.g.,~\cite[Prop. 2.7.15 (3)]{repr})
implies that the dimension of~$V_p$ is exactly~$1$ if and only if the
action of $G_p^g$ on $\End(\bQl^{d-1})$ is irreducible. But $V_p$
contains both the multiples of the identity and the space
$\End^0(\bQl^{d-1})$ of matrices of trace zero as stable subspaces, so
this irreducibility can only hold if $\End^0(\bQl^{d-1})$ is zero,
i.e., if~$d=2$. So for primes totally split in~$K$, we have
$\dim(V_p)\geq 2$ hence
$$
\frac{1}{p}\sum_{a\in\Ff_p^{\times}} |W(a;p)|^4\geq 2+O(p^{-1/2})
$$
by~(\ref{eq-rh}).

To improve on this unless the limit is equal to~$2$, we use very deep
work of Katz~\cite[Th. 14.3.4]{katz-esde}
that implies that $G_{p,0}^g$ is independent of~$p$ for all~$p$ large
enough. Take a prime $p$ large enough so that $G_{p,0}^g$ has
stabilized and suppose that $\dim(V_p)=2$ for some~$p$ split
in~$K$. Then the group~$G_{p,0}^g$ must act irreducibly on matrices of
trace zero. But the Lie algebra of $G_p^g$ is a stable subspace, so
that we must have $\mathrm{Lie}(G_{p,0}^g)=\End^0(\bQl^{d-1})$. That
means that $G_{p,0}^g$ is equal to $\SL_{d-1}(\bQl)$. Then for all
primes~$p$ large enough we have $ZG_{p,0}^g=\GL_{d-1}(\bQl)$,
where~$Z$ is the group of scalar matrices in~$\GL_{d-1}(\bQl)$, which
implies that~$f_p$ acts trivially for all $p$ large enough, and then
that the limit of the fourth moments exists and is equal to~$2$.

To finally show that the constant~$2$ is best possible, we recall that
Katz has proved 
that if~$f$ is a Sidon--Morse polynomial (e.g., the derivative $f'$
has Galois group~$S_{d-1}$), then $G_{p}^g$ contains $\SL_{d-1}(\bQl)$
for all~$p$ large enough (see Theorem~\ref{th-connect}), in which case
it is well-known that the action of~$G_p^g$ on the space of matrices
of trace zero is irreducible, so that the dimension of~$V_p$ is then
equal to~$2$ for all~$p$ large enough.

\begin{remark}
  The arguments above are related to the easiest part of the
  Larsen Alternative~\cite{katz-larsen}.
\end{remark}

\begin{proof}[Proof of Proposition~\ref{pr-motivic}]
  We will begin by proving the statement without the information that
  the degree of~$K$ can be bounded in terms of~$d$ only, since the
  latter requires extra ingredients.
  \par
  \textbf{Step 1.} We first prove that, for all primes~$p$ large
  enough, the action of~$f_p$ on~$V_p$ is of finite order. Since we
  are assuming that~$G_p^g$ acts irreducibly on~$\bQl^{d-1}$, a result
  of Katz shows that the connected component of the identity
  $G_{p,0}^g$ of~$G_p^g$ acts irreducibly on~$\bQl^{d-1}$,
  provided~$p>d$ (see~\cite[Lemma 7.7.5]{katz-esde}), which we have
  assumed to be the case.
  \par
  Recall that the group of outer automorphisms of~$G_{p,0}^g$ is the
  group $\mathrm{Out}(G_{p,0}^g)$ of automorphisms modulo inner
  automorphisms.  For~$g\in G_p$, let
  $\alpha_p(g)\in \mathrm{Out}(G_{p,0}^g)$ be the class modulo inner
  automorphisms of the automorphism $x\mapsto xgx^{-1}$ of $G_{p,0}^g$
  (it is an automorphism since~$G_{p,0}^g$ is normal in~$G_p$). This
  defines a group homomorphism
  $$
  G_p\fleche{\alpha_p} \mathrm{Out}(G_{p,0}^g).
  $$
  \par
  We claim that the kernel of $\alpha_p$ is $G_{p,0}^gZ\cap G_p$
  where~$Z$ is again the group of scalar matrices
  in~$\GL_{d-1}(\bQl)$.  Indeed, the condition $\alpha_p(g)=1$ means
  that there exists~$h\in G_{p,0}^g$ such that $ gxg^{-1}=hxh^{-1}$
  for all $x\in G_{p,0}^g$, which is equivalent to $h^{-1}g$ belonging
  to the centralizer of~$G_{p,0}^g$ in~$\GL_{d-1}(\bQl)$, or in other
  words, to $h^{-1}g$ commuting with the action of $G_{p,0}^g$ on
  $\bQl^{d-1}$. By Schur's Lemma (see, e.g.,~\cite[Prop. 2.7.15
  (2)]{repr}), the irreducibility of the action of~$G_{p,0}^g$ implies
  that this centralizer is equal to~$Z$. Thus $g\in\ker(\alpha_p)$ is
  equivalent to~$g\in G_{p,0}^gZ\cap G_p$.

  We deduce therefore that we have an injective group homomorphism
  $$
  G_p/(G_{p,0}^gZ\cap G_p)\fleche{\alpha_p} \mathrm{Out}(G_{p,0}^g).
  $$
  \par
  Because $G_{p,0}^g$ is a connected semisimple group, its outer
  automorphism group is finite (see, e.g.,~\cite[p.~42,
  prop.~18]{lie9} in the case of compact groups). Hence $\alpha_p$
  injects $G_p/(G_{p,0}^gZ\cap G_p)$ in a finite group. Since $Z$ acts
  trivially on $\End(W)$ for any representation~$W$, and since~$G_p^g$
  acts trivially on~$V_p$, this shows that the order of the action
  of~$f_p$ on $V_p$ is a divisor of the order of the outer
  automorphism group.
  \par
  \textbf{Step 2.} We next prove that there exists a
  finite-dimensional continuous $\ell$-adic Galois representation
  $$
  \rho\colon \Gal(\bar{\Qq}/\Qq)\to \GL(E)
  $$
  for some $\bQl$-vector space~$E$, such that for all but finitely
  many primes, the action of Frobenius at~$p$ on~$E$ ``is'' is the
  same as the action of $f_p$ on~$V_p$.  It is enough to define a
  constructible $\ell$-adic sheaf~$\mathcal{V}$
  on~$\spec(\Zz[1/\ell N])$ for some integer~$N\geq 1$ such that the
  stalk over all but finitely many primes~$p$ ``is'' the space~$V_p$,
  and such that the action of~$f_p$ coincides with the action of the
  Frobenius at~$p$. Indeed, this sheaf~$\mathcal{V}$ will be lisse
  outside of a finite set~$S$ of primes, hence will correspond to a
  Galois representation of the Galois group of the maximal extension
  unramified outside~$S$, and this is a quotient of the Galois group
  of~$\Qq$.
  \par
  To construct~$\mathcal{V}$, we use~\cite[Lemma 4.23]{kms} (see
  also~\cite[Lemma 4.27]{kms} for a more difficult application),
  applied to the data
  \begin{multline*}
    (X,Y,f,g)=(\Aa^4,\spec(\Zz[1/\ell]),\text{the structure morphism},
    \\
    g(x,y,z,w)= f(x)+f(y)-f(z)-f(w))
  \end{multline*}
  and take the second cohomology sheaf of the complex resulting from
  this application of~\cite[Lemma 4.23]{kms}.
  
  That this ``works'' results from the expression
  $$
  \frac{1}{p}\sum_{a\in\Ff_p^{\times}}\Bigl| \frac{1}{\sqrt{p}}
  \sum_{x\in \Ff_p}e\Bigl(\frac{af(x)}{p}\Bigr) \Bigr|^4=
  \frac{1}{p^3}\sum_{x,y,z,w\in\Ff_p}\sum_{a\in\Ff_p^{\times}}
  e\Bigl(\frac{ag(x,y,z,w)}{p}\Bigr),
  $$
  combined with the cohomological expression
  \begin{equation}\label{eq-iso}
    V_p\simeq H^2_c(\Gg_m\times \bFp, \End(\End(\mathcal{F}_p)))(1).
  \end{equation}
  \par
  \textbf{Step 3.} By the compatibility with Frobenius of the
  isomorphism~(\ref{eq-iso}) in Step~2, and by Step~1, the action of
  Frobenius at~$p$ under~$\rho$ is of finite order for all but finitely
  many primes~$p$. The image~$H$ of~$\rho$ is a compact $\ell$-adic Lie
  group (identifying~$\GL(E)$ with~$\GL_m(\bQl)$ for some~$m\geq 1$, we
  first note that~$H$ is contained in~$\GL_m(L)$ for some finite
  extension~$L$ of~$\Qq_{\ell}$, by an oft-rediscovered lemma -- see for
  instance~\cite[Lemma 9.0.8]{katz-sarnak} -- and then it is a closed
  subgroup of an $\ell$-adic Lie group, hence itself an~$\ell$-adic Lie
  group by, e.g.,~\cite[p. 227, th.~2]{lie3}). It follows
  from~\cite[Cor. 1, p. 169]{lie3} that there is a neighborhood~$U$
  of~$1\in H$ which contains no non-trivial finite subgroup; there is then
  a number field~$K$ such that the finite-index subgroup
  $\Gal(\bar{\Qq}/K)$ maps to~$U$. All the Frobenius elements in this
  subgroup (which exist outside any given finite set of primes because
  Frobenius elements are dense, by a form of Chebotarev's density theorem)
  must map to the identity, which means that~$\Gal(\bar{\Qq}/K)$ is in the
  kernel of~$\rho$. This implies that for a prime~$p$ that is totally split
  in~$K$, the action of~$f_p$, which ``is'' the action of Frobenius
  under~$\rho$, is trivial. This proves the result, up to the bound on the
  degree of~$K$.
  \par
  \textbf{Step 4.} Now we explain how to bound the degree of~$K$ in
  terms of~$d$ only.

  The first ingredient is a fact from the theory of finite groups: for
  given positive integers $k$ and $m$, if $\Gamma$ is a finite
  subgroup of~$\GL_k(\bQl)$ such that all elements of~$\Gamma$ have
  order dividing~$m$, then the order of~$\Gamma$ is bounded in terms
  of~$k$ and~$m$ only. Indeed, by a well-known theorem of Jordan (see,
  e.g.,~\cite[Th. 36.13]{cr}), there exists a normal abelian
  subgroup~$\Gamma_0$ of $\Gamma$ of index bounded in terms of $k$ and
  $m$. This reduces the problem to the abelian case; but $\Gamma_0$
  can be diagonalized, and the bound on the order of its elements show
  that $\Gamma_0$ is isomorphic to a subgroup of $(\Zz/m\Zz)^k$, hence
  the result.
  \par
  We want to apply this to the image~$\Gamma\subset \GL(E)$ of the
  Galois representation~$\rho$. We have~$\dim(E)\leq (d-1)^4$. By the
  Chebotarev Density Theorem, it is then enough to prove that the
  order of the action of~$f_p$ on~$V_p$ is uniformly bounded in terms
  of~$d$ only. For this we use the fact that there are, up to
  isomorphism, only finitely many possibilities for~$G_{p,0}^g$, since
  it is a connected and semisimple subgroup of~$\GL_{d-1}$ (this
  follows, in the equivalent case of compact Lie groups, from the
  discussion in~\cite[\S 4, $\text{n}^0$9, Scholie]{lie9}, which shows
  that such subgroups are classified by their root system~$R$, which
  here has rank~$\leq d-1$, which gives only finitely many
  possibilities, and for each root system~$R$ by a subgroup of the
  quotient~$Q(R)/P(R)$ discussed in loc. cit.; since this quotient is
  finite by~\cite[\S 1, $\text{n}^o$10]{lie6}, there are again only
  finitely many possibilities).  So the order of~$f_p$ is a divisor of
  the order of one of finitely many finite groups (depending only
  on~$d$).
\end{proof}

\section{Generic polynomials}\label{sec-generic}

In this section, we will prove the kind of non-correlation estimates
modulo primes that are needed in the proof of
Theorem~\ref{th-main2}. We also explain Proposition~\ref{prop3.5} at
the end.

We first make some remarks concerning Sidon--Morse polynomials:

\begin{lemma}\label{lm-indec}
  Let $K$ be any field and let $f\in K[X]$ be a Morse polynomial of
  degree~$d\geq 2$.
  \par
  \emph{(1)} The polynomial $f$ is indecomposable over~$K$.
  \par
  \emph{(2)} For any $c\in K$, the polynomials $f+c$ and $-f+c$ are
  Morse polynomials. If $f$ is a Sidon--Morse polynomial, then $f+c$ and
  $-f+c$ are Sidon--Morse polynomials.
\end{lemma}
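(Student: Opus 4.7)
\smallskip

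The plan is to prove (1) by a direct zero-count on $f'$ under a decomposition hypothesis, and (2) by observing that all the conditions appearing in the Morse and Sidon--Morse definitions are invariant under the affine operations behind $f\mapsto f+c$ and $f\mapsto -f+c$.

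For (1), I would suppose $f=g\circ h$ with $e:=\deg h\ge 2$ and $m:=\deg g\ge 2$. The identity $f'(X)=h'(X)\,g'(h(X))$, combined with the Morse requirement $\deg f'=em-1$, forces $\deg h'=e-1$ and $\deg g'=m-1$, so $g'$ has at least one zero $\gamma\in\bar K$. Squarefreeness of $f'$ implies that $h'$ and $g'\circ h$ share no common root, since any such common root would yield a factor $(X-\beta)^2$ in $f'$. In particular $h-\gamma$ cannot have a multiple root (such a root would simultaneously zero $h'$ and $g'\circ h$), so $h-\gamma$ has $e\ge 2$ distinct roots $\beta_1,\beta_2,\dots$, each a zero of $f'$ with $f(\beta_i)=g(\gamma)$. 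This produces two zeros of $f'$ with identical critical value, contradicting the Morse assumption.

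For (2), both $(f+c)'=f'$ and $(-f+c)'=-f'$ have the same zero set, squarefreeness, and degree as $f'$. The critical value sets $V_{f+c}=V_f+c$ and $V_{-f+c}=c-V_f$ are the images of $V_f$ under injective affine maps, so they remain distinct. Sidon-ness is invariant under every affine bijection of the additive group of $K$, handling case~(1) of Definition~\ref{def-generic}. For case~(2), writing $f(X)=a_0 g(c_0 X+d_0)+b_0$ with $g$ odd and $V_g$ a symmetric Sidon set, one checks that
\[
f+c=a_0 g(c_0 X+d_0)+(b_0+c),\qquad -f+c=-a_0 g(c_0 X+d_0)+(c-b_0),
\]
so both are linearly equivalent to the \emph{same} odd polynomial $g$, with unchanged critical value set $V_g$.

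The one delicate point I anticipate is the clause ``$f$ has no repeated roots'' in Definition~\ref{def-morse}, which, as observed in the remark following that definition, is equivalent to $0\notin V_f$; this condition fails for the finitely many $c$ with $-c\in V_f$ (respectively $c\in V_f$ for $-f+c$). I expect this to be the main obstacle to a literal reading of (2), to be handled either by an implicit restriction to generic $c$ or by noting that the downstream uses of this lemma rely only on squarefreeness of $f'$, distinctness of critical values, and the (symmetric) Sidon property, all of which are preserved unconditionally by the two affine operations considered.
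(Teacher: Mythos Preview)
Your proof of (1) is correct and shares the paper's core idea (preimages under $h$ of a critical point of $g$ yield repeated critical values of $f$), though your use of the squarefreeness of $f'$ to force $h-\gamma$ to be separable is more direct than the paper's version, which instead runs a small case analysis allowing $h-\alpha$ to have a single root and then computes $f$ explicitly in the residual case. Your proof of (2) is exactly the paper's: the paper writes only that it is ``straightforward from the definition, since the critical points \ldots\ are the same''. Your caveat about the ``no repeated roots'' clause is well-founded and is not addressed in the paper's proof either; as literally stated, part~(2) fails for the finitely many $c$ with $-c\in V_f$ (resp.\ $c\in V_f$), so this is a minor inaccuracy in the lemma's statement rather than a defect in your argument.
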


\begin{proof}
  (1) We show that if~$f$ is decomposable, then it is not a Morse
  polynomial. Let $f=g\circ h$ where $\deg(g)\geq 2$ and~$\deg(h)\geq 2$
  be a decomposable polynomial. Note that $p$ does not divide either
  $\deg(g)$ or~$\deg(f)$ since $p\nmid d$.
  \par
  For any critical point $\alpha$ of~$g$, the critical values of~$f$
  contain, with multiplicity,
  the values $g(h(\beta))$ where $h(\beta)=\alpha$. This will give
  rise to a critical value with multiplicity at least~$2$
  \emph{unless} $h-\alpha=\gamma( X-\beta)^{\deg(h)}$ for some
  $\gamma\in K^{\times}$. Since $p$ does not divide~$\deg(h)$, this
  can only occur for a single value of~$\alpha$, so that~$g$ is of the
  form
  $$
  g=\delta (X-\alpha)^{\deg(g)}+\eta
  $$
  for some $\delta\in K^{\times}$ and $\eta\in K$. Then we get
  $$
  g\circ h=\eta + \delta \gamma^{\deg(g)}(X-\beta)^d,
  $$
  which has a single critical value, and is therefore not a Morse
  polynomial.
  \par
  (2) This is straightforward from the definition, since the critical
  points of $g=f+c$ (resp. $g=-f+c$) are the same as those of~$f$, so
  the critical values of~$g$ are those of $f$ translated by~$c$
  (resp. the negative of those of~$f$, translated by~$c$).
\end{proof}

Let~$p$ be a prime number and~$f\in\Ff_p[X]$ a Sidon--Morse
polynomial. We define the $\ell$-adic sheaf~$\mcF_f$ associated to~$f$
as in the previous section.  We will now normalize it in a specific
way. We denote by $\mcL_2$ the Kummer sheaf associated to the Legendre
character, with trace function $a\mapsto (a/p)$.

\begin{definition}[Normalized sheaf]
  Let $p$ be a prime and $f\in\Ff_p[X]$ a Sidon--Morse polynomial with
  $p\nmid \deg(f)-1$.
  \par
  (1) If $f$ is not symmetric Sidon, then there is a unique $c\in \Ff_p$
  such that the sum of the critical values of $f+c$ is equal to~$0$, and
  the \emph{normalized} $\mctF_f$ sheaf of $f$ is defined to be
  $$
  \mctF_f=\mcF_{f+c}\otimes \mathcal{L}_2^{d-1}.
  $$
  \par
  We then say that~$c=c_f$ is the \emph{critical shift} of~$f$.
  \par
  (2) If $f$ is symmetric Sidon polynomial, and
  $$
  f=g(\beta X+\gamma)+\delta
  $$
  where~$g$ is odd, then we put
  $$
  \mctF_f=\mcF_g.
  $$
  \par
  We note that the sum of critical values of~$g$ is then equal to~$0$.
\end{definition}

The trace function of $\mctF_f$ is $0$ for~$a=0$ and for
$a\in\Ff_p^{\times}$ is given either by
\begin{equation}\label{eq-trace-norm}
  \widetilde{W}_f(a;p)=\frac{1}{\sqrt{p}}\Bigl(\frac{a}{p}\Bigr)^{d-1}
  \sum_{x\in\Ff_p}e\Bigl(\frac{a(f(x)+c)}{p}\Bigr)=
  \Bigl(\frac{a}{p}\Bigr)^{d-1}e\Bigl(\frac{ac}{p}\Bigr)
  \sum_{x\in\Ff_p}e\Bigl(\frac{af(x)}{p}\Bigr)
\end{equation}
or by
\begin{multline}\label{eq-trace-norm2}
  \widetilde{W}_f(a;p)= \frac{1}{\sqrt{p}}
  \sum_{x\in\Ff_p}e\Bigl(\frac{ag(x)}{p}\Bigr)
  \\=
  \frac{1}{\sqrt{p}}
  e\Bigl(-\frac{a\delta}{p}\Bigr)
  \sum_{x\in\Ff_p}e\Bigl(\frac{af((x-\gamma)/\beta)}{p}\Bigr)
  =
  e\Bigl(-\frac{a\delta}{p}\Bigr)W_f(a;p).
\end{multline}
in the symmetric case. In particular, we see that in all cases, the
formula
$$
|\widetilde{W}_f(a;p)|=|W_f(a;p)|
$$
is valid all~$a$ modulo~$p$.

The point of this normalization is the following theorem of Katz:

\begin{theorem}[Katz]\label{th-connect}
  Let~$p$ be a prime number. Let $f\in\Ff_p[X]$ be a Sidon--Morse
  polynomial of degree $d\geq 3$. Assume that $p>2d-1$ and
  that~$p\nmid d-1$.
  \par
  \emph{(1)} If $f$ is not a symmetric Sidon--Morse polynomial, then the
  geometric monodromy group of $\mctF_f$ is equal to $\SL_{d-1}(\bQl)$.
  \par
  \emph{(2)} If $f$ is a symmetric Sidon--Morse polynomial, which
  implies that $d$ is odd, then the geometric monodromy group of
  $\mctF_f$ is isomorphic to $\Sp_{d-1}(\bQl)$.
\end{theorem}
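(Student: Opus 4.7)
The statement is due to Katz, and my plan is to follow his strategy from \cite{katz-esde} in three steps: (i) establish geometric irreducibility of $\mctF_f$; (ii) compute local monodromy and use the tailored normalization to embed the geometric monodromy group $G^g$ in the claimed classical group; and (iii) invoke a Larsen-alternative style classification to upgrade the embedding to an equality. For (i), the Morse assumption forces $f$ to be indecomposable by Lemma~\ref{lm-indec}, so Fried's Proposition~\ref{pr-fried} gives absolute irreducibility of $(f(X)-f(Y))/(X-Y)$, and the Fourier-side criterion of Lemma~\ref{lm-geom-irred} then gives geometric irreducibility of $\mcF_f$. Neither the critical shift nor the rank-one Kummer twist by $\mcL_2^{d-1}$ affects irreducibility.

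For (ii), I would compute the local monodromy of $\mcG_p = f_*\bQl/\bQl$: since $p > d$ it is tame everywhere, with a quadratic pseudoreflection at each critical value of $f$ (this uses Morse), and a decomposition into characters of order dividing $d$ at $\infty$. Passing through Laumon's local Fourier transform (\cite[Ch.~7]{katz-esde}), $\mctF_f$ is lisse on $\Gm$, tame at $0$, and wild at $\infty$ with all breaks equal to $d/(d-1)$, the critical values of $f$ appearing as the leading constants in the stationary-phase expansion at $\infty$. The critical shift $c_f$ is designed precisely to cancel the additive character $a \mapsto e(ac/p)$ coming from the sum of the critical values, while the twist by $\mcL_2^{d-1}$ cancels the Legendre character arising from the product of the Hessian signs at the critical points; their combined effect is to make the geometric determinant of $\mctF_f$ trivial, placing $G^g \subset \SL_{d-1}(\bQl)$. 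In the symmetric Sidon case, $g$ being odd produces a geometric isomorphism $[-1]^* \mctF_f \simeq \mctF_f^\vee$; since $d-1$ is even this pairing must be alternating, embedding $G^g \subset \Sp_{d-1}(\bQl)$.

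Step (iii) is the main obstacle. By Deligne's theorem the identity component $G^g_0$ is semisimple, and \cite[Lemma~7.7.5]{katz-esde} shows it still acts irreducibly on $\bQl^{d-1}$. I would then run through the classification of connected irreducible semisimple subgroups of $\SL_{d-1}(\bQl)$ (respectively $\Sp_{d-1}(\bQl)$) and exclude each proper one: any tensor-product, symmetric, exterior, or induced decomposition of the standard representation would force nontrivial additive relations among the $d-1$ critical values that appear as wild Frobenius eigenvalues at $\infty$, and the Sidon condition precisely forbids all such relations $a+b=c+d$ among critical values. In the symmetric case the additional subtlety is excluding orthogonal and reducible-through-the-center possibilities, for which the strengthening to the \emph{symmetric} Sidon condition allowing the flip $b=\alpha-a$ is exactly what matches the symplectic pairing on eigenvalues. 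The delicate part is precisely this translation of the arithmetic Sidon or symmetric-Sidon condition into the algebraic exclusion of each proper irreducible semisimple subgroup, carried out in the detailed case analyses of \cite[Ch.~7]{katz-esde}.
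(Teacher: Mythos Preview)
Your three-step outline is in the spirit of Katz's argument, and indeed the paper's own proof is simply a sequence of citations to \cite{katz-esde}: Theorem~7.9.6 for $G^g\supset\SL_{d-1}$ in the non-symmetric case, Lemma~7.10.4(2) for the trivial determinant, Lemma~7.10.4(3) for the symplectic autoduality in the symmetric case, and Theorem~7.9.7 for the trichotomy $\SL_{d-1}/\Sp_{d-1}/\SO_{d-1}$. However, two steps in your sketch are wrong, and your step~(iii) is not the mechanism the paper highlights.

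First, the breaks of $\mctF_f$ at $\infty$ are all equal to~$1$, not $d/(d-1)$. The wild local monodromy at $\infty$ is $\bigoplus_{v\in V_f}\mcL_{\psi(vX)}$ (this is \cite[Th.~7.8.4(2)]{katz-esde}, recorded in the paper as~(\ref{eq-local-infinity})), a sum of Artin--Schreier characters each of break~$1$. You appear to be conflating the Fourier transform of $f_*\bQl/\bQl$ with the Fourier transform of $\mcL_{\psi(f)}$ (the Airy-type sheaf), which is a different object and does have break $d/(d-1)$.

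Second, ``since $d-1$ is even this pairing must be alternating'' is not a valid inference: even-dimensional irreducible self-dual representations can be orthogonal. The sign of the autoduality requires a separate computation (this is the content of \cite[Lemma~7.10.4(3)]{katz-esde}); parity of $d-1$ alone does not decide it. Also, $g$ odd gives directly $\mcF_g\simeq\mcF_g^{\vee}$ (self-duality on the nose), not merely $[-1]^*\mcF_g\simeq\mcF_g^{\vee}$.

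Finally, your step~(iii) via a case-by-case elimination of proper semisimple subgroups is not how the paper packages the argument. Section~\ref{sec-remark} explains that the clean route (for the $\SL_{d-1}$ case) is Gabber's criterion \cite[Th.~1.0]{katz-esde}, restated and proved as Proposition~\ref{pr-gabber}: the image of wild inertia at~$\infty$ furnishes a diagonal subgroup of the normalizer of $G^g$ whose restricted characters are $a\mapsto\psi(v_i a)$, and the Sidon property of $\{v_i\}$ then forces $G^g=\SL_{d-1}$. This is precisely what the Sidon hypothesis is tailored to, and avoids the classification entirely. A minor further point: Proposition~\ref{pr-fried} and Lemma~\ref{lm-geom-irred} as stated in the paper are over~$\Qq$, whereas Theorem~\ref{th-connect} is over~$\Ff_p$, so those citations do not apply verbatim; step~(i) is in any case subsumed by the containment $G^g\supset\SL_{d-1}$.
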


\begin{proof}
  (1) If $f$ is not of symmetric type, then the geometric monodromy
  group contains $\SL_{d-1}$ under the assumption on $p$,
  by~\cite[Th. 7.9.6]{katz-esde}, and has trivial determinant
  by~\cite[Lemma 7.10.4, (2)]{katz-esde}, so it must be $\SL_{d-1}$.
  \par
  (2) If $f$ is of symmetric type, then under the assumption on $p$, a
  conjugate of the geometric monodromy group of $\mctF_f$ is contained
  in $\Sp_{d-1}$ by~\cite[Lemma 7.10.4, (3)]{katz-esde} (since the
  associated polynomial~$g$ is odd). By~\cite[Th. 7.9.7]{katz-esde}, it
  contains either $\SL_{d-1}$ or $\Sp_{d-1}$ or $\SO_{d-1}$; the only
  possibility that is compatible with both these facts is that it is
  $\Sp_{d-1}$.
\end{proof}

\begin{remark}
  If we consider a Morse polynomial~$f$ such that the set of critical
  values is a symmetric Sidon set, we might hope that (2) still holds.
  However, although one can still deduce from the work of Katz that the
  geometric monodromy group of $\mctF_f$ contains a symplectic group, we
  currently do not know if this condition is sufficient to ensure that
  $\mctF$ has conversely a symplectic symmetry.
\end{remark}

We will also need a result that is essentially a consequence of the ideas
of Fried.

\begin{proposition}\label{pr-cn-c}
  Let $p$ be a prime number.  Let~$f$ and~$g$ in~$\Ff_p[X]$ be
  Sidon--Morse polynomials of respective degree~$d_f\geq 3$
  and~$d_g\geq 3$. Assume that $d_f<p$ and~$d_g<p$.
  \par
  If $f$ and~$g$ are not linearly equivalent over~$\bar{\Ff}_p$, then
  $f(X)-g(Y)+c$ and $f(X)+g(Y)+c$ are absolutely irreducible for
  any~$c$.
\end{proposition}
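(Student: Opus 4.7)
The plan is to reduce both assertions to the absolute irreducibility of $f(X) - h(Y)$ for a suitable Sidon--Morse $h$ not linearly equivalent to $f$, and then to analyze the factorization via the Galois theory of the covers $X\mapsto f(X)$ and $Y\mapsto h(Y)$, in the spirit of Fried.

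By Lemma~\ref{lm-indec}(2), both $g - c$ and $-g - c$ are Sidon--Morse over $\bFp$; and since linear equivalence over $\bFp$ is preserved by translation, scaling, and negation, neither of them is linearly equivalent to $f$. Rewriting $f(X) - g(Y) + c = f(X) - (g(Y) - c)$ and $f(X) + g(Y) + c = f(X) - (-g(Y) - c)$, it is therefore enough to prove the following: if $f$ and $h$ are Sidon--Morse polynomials over $\bFp$ of degrees $\geq 3$ (both less than $p$) that are not linearly equivalent over $\bFp$, then $f(X) - h(Y)$ is absolutely irreducible.

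For this, I would introduce an auxiliary indeterminate $T$ and consider the splitting fields $L_f$ and $L_h$ of $f(X) - T$ and $h(Y) - T$ over $\bFp(T)$. The Morse hypothesis (together with $\deg f, \deg h < p$) implies, by the classical argument on monodromy at simple branch points, that $\Gal(L_f / \bFp(T)) \cong S_{d_f}$ and $\Gal(L_h / \bFp(T)) \cong S_{d_h}$: the ramification at each of the $d_f - 1$ distinct critical values is a simple transposition, and these transpositions generate the full symmetric group. The number of absolutely irreducible factors of $f(X) - h(Y)$ then equals the number of orbits of $G := \Gal(L_f L_h / \bFp(T)) \subseteq S_{d_f} \times S_{d_h}$ on the set of $d_f \cdot d_h$ pairs of roots. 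By Goursat's lemma, together with a direct check that the common sign-character quotient $\Zz/2\Zz$ does not obstruct transitivity on pairs, the only remaining obstruction is a non-trivial isomorphism $S_{d_f}/N_f \cong S_{d_h}/N_h$ with $N_f, N_h$ strictly smaller than $A_{d_f}, A_{d_h}$; for $d_f, d_h \geq 5$ the only such possibility is $d_f = d_h$, $N_f = N_h = 1$, and $L_f = L_h$.

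The main obstacle is to rule out this last case, i.e.\ to show that $L_f = L_h$ forces $f$ and $h$ to be linearly equivalent over $\bFp$, contradicting the hypothesis. Here the Sidon--Morse hypothesis is crucial. Equality of the splitting fields implies in particular equality of the branch loci with compatible ramification, so $V_f = V_h$; the Sidon condition on this common set of critical values then uniquely determines a pairing of the critical points of $f$ with those of $h$ through the additive structure of the Sidon set, and comparing the local Taylor expansions at paired critical points produces an explicit affine change of variables exhibiting the linear equivalence. In the symmetric Sidon--Morse case one must additionally account for the involutive $\Zz/2\Zz$ symmetry of the associated odd polynomial via the symmetric Sidon structure of $V_f$; the small-degree cases $d\in\{3,4\}$ require a separate but analogous analysis, using that Dickson polynomials of degree $\geq 3$ are either not Morse or are already linearly equivalent to one another over $\bFp$, and hence ruled out by the non-equivalence hypothesis.
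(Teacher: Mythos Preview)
The overall strategy is sound and close in spirit to the paper's proof: both reduce to showing that $f(X)-h(Y)$ is absolutely irreducible for two Sidon--Morse polynomials that are not linearly equivalent, and both ultimately rest on the fact that the Galois group of $f(X)-T$ over $\bFp(T)$ is $\mathfrak{S}_d$ in its standard action. Your Goursat analysis correctly reduces the problem to the case $d_f=d_g=d$ where $G=\Gal(L_fL_h/\bFp(T))$ is the graph of an automorphism $\phi$ of $\mathfrak{S}_d$, i.e.\ $L_f=L_h$.

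The gap is in your final step. The argument you sketch --- that $V_f=V_h$ together with the Sidon property and local Taylor expansions at paired critical points forces linear equivalence --- does not work. Equality of critical values does \emph{not} determine a Morse polynomial up to linear equivalence: for $d\geq 4$ there are non-isomorphic degree-$d$ simple branched covers of $\Pp^1$ with the same branch locus (same Hurwitz branch set, different monodromy data). The Sidon condition constrains the critical \emph{values} only and says nothing about an affine relation between the critical \emph{points} of $f$ and $h$; there is no ``additive structure'' producing such a map, and second-order Taylor data at individual critical points cannot be patched into a global affine change of variable. (The appeal to Dickson polynomials for $d\in\{3,4\}$ is also misplaced: those enter Fried's analysis of $(f(X)-f(Y))/(X-Y)$, which is a different question.)

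What actually closes the argument is the group theory you stopped just short of. If $\phi$ is \emph{inner}, the two permutation actions of $\mathfrak{S}_d$ on the roots are conjugate, so the covers $f,h\colon\Pp^1\to\Pp^1$ are isomorphic over the target; since both are totally ramified at $\infty$, the isomorphism of sources is affine, giving strict linear equivalence and the desired contradiction. If $\phi$ is \emph{outer} (only possible for $d=6$), the two transitive $6$-point actions of $\mathfrak{S}_6$ have distinct permutation characters, so the inner product $\langle\chi,\chi\circ\phi\rangle$ equals $1$ and the graph of $\phi$ already acts transitively on pairs --- this case is not an obstruction at all. This is exactly the content of the paper's proof, phrased there via the Cassou-Nogu\`es--Couveignes reference together with the observation that the faithful degree-$d$ permutation representation of $\mathfrak{S}_d$ is determined by its character. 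Replace your Taylor-expansion paragraph with this argument and the proof goes through.
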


\begin{proof}
  Since~$f+c$ is a Sidon--Morse polynomial (Lemma~\ref{lm-indec}), and
  linearly equivalent to~$g$ if and only $f$ is, we can assume
  that~$c=0$. Since $-g$ is a Sidon--Morse polynomial, and linearly
  equivalent to~$f$ if and only if so is~$g$, we need only consider
  the case of $f(X)-g(Y)$.

  Let~$G$ be the Galois group of the equation $f(X)-Y=0$ over the
  field $\bar{\Ff}_p(Y)$ (so~$X$ is the variable). If $f(X)-g(Y)$ is
  not absolutely irreducible then $G$ is also isomorphic to the one
  for the equation $g(X)-Y=0$ by~\cite[\S 2.1.1]{cn-c}.\footnote{\
    This is written for the base field~$\Cc$, but the argument extends
    to any algebraically closed field when the polynomials involved
    have degree less than the characteristic of the field.}
  By~\cite[\S 2.1.4]{cn-c}, if $f$ and $g$ are not linearly equivalent
  over~$\bar{\Ff}_p$, then the faithful permutation representations
  of~$G$ on the roots of these two equations are not equivalent as
  permutation representations, but have the same character (i.e., are
  equivalent as linear representations). However, for Sidon--Morse
  polynomials $f$ and $g$, the group $G$ and its permutation
  representation are isomorphic to $\mathfrak{S}_{d}$ with the
  standard permutation representation on $d$ letters (see~\cite[Proof
  of Lemma 7.10.2.3]{katz-esde}).  But this is a contradiction, since
  this faithful permutation representation of~$\mathfrak{S}_d$ is
  characterized by its character (the only non-obvious case is
  when~$d=6$ and we consider the standard permutation representation
  and that given by a non-trivial outer automorphism
  of~$\mathfrak{S}_6$, but these have different characters,
  e.g. because a transposition is mapped to, respectively, a
  transposition, with $4$ fixed points, or a product of three disjoint
  transpositions, without fixed points).
\end{proof}

\begin{proposition}\label{pr-mult}
  Let $p$ be a prime.  Let $m\geq 1$ be an integer and let $f_1$,
  \ldots, $f_m$ be Sidon--Morse polynomials in $\Ff_p[X]$. Assume that
  $p>2\deg(f_i)-1$ and~$p\nmid (\deg(f_i)-1)$ for all $i$. Assume also
  that for all $i\not=j$, the polynomials~$f_i$ and~$f_j$ are not
  linearly equivalent over~$\bar{\Ff}_p$.
  \par
  Then the geometric monodromy group of the sheaf
  $$
  \bigoplus_{1\leq i\leq m}\mctF_{f_i}
  $$
  is the direct product of the geometric monodromy groups of the sheaves
  $\mctF_{f_i}$.
\end{proposition}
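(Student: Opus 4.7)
The plan is a Goursat--Kolchin--Ribet type argument. By Theorem~\ref{th-connect}, each factor $G_i := G^g(\mctF_{f_i})$ is either $\SL_{d_i-1}(\bQl)$ or $\Sp_{d_i-1}(\bQl)$, hence all $G_i$ are connected and almost-simple algebraic groups. The geometric monodromy group $H$ of $\bigoplus_i \mctF_{f_i}$ is by construction a subgroup of $G_1 \times \cdots \times G_m$ that surjects onto each factor. First I would reduce to $m=2$ by induction: granting the statement for $m-1$ sheaves, the projection of $H$ to $G_1 \times \cdots \times G_{m-1}$ is already the full product, and the two-factor case applied to the pair (this product, $G_m$) concludes the induction.

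For $m=2$, suppose $H \subsetneq G_1 \times G_2$ still surjects onto each factor. Goursat's lemma combined with the almost-simplicity of $G_1, G_2$ forces $H$ (modulo centers) to be the graph of an algebraic isomorphism $G_1 \cong G_2$. Transported to the standard representations defining $\mctF_{f_1}, \mctF_{f_2}$, this would yield a geometric isomorphism
$$
\mctF_{f_1} \simeq \mctF_{f_2} \otimes \chi \quad \text{or, in the $\SL$ case,}\quad \mctF_{f_1} \simeq \mctF_{f_2}^{\vee} \otimes \chi
$$
for some rank-one geometric sheaf $\chi$; the dual appears because $g \mapsto (g^{T})^{-1}$ is the unique nontrivial outer automorphism of $\SL_n$ for $n \geq 3$, while in the $\Sp$-case self-duality renders the dual option redundant.

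The final step is to contradict this via Proposition~\ref{pr-cn-c}. Expanding the relevant inner product and interchanging sums yields, up to normalizing factors,
$$
\frac{1}{p} \sum_{a \in \Ff_p^{\times}} \widetilde W_{f_1}(a;p)\,\overline{\widetilde W_{f_2}(a;p)}\,\bar\chi(a) = \frac{1}{p}\,\bigl|\{(x,y) \in \Ff_p^{2} : f_1(x) = \pm f_2(y) + c\}\bigr| - 1,
$$
where the sign and the shift $c \in \Ff_p$ depend on the type of $\chi$ and on whether the dual is present. Proposition~\ref{pr-cn-c} gives absolute irreducibility of both $f_1(X) - f_2(Y) + c$ and $f_1(X) + f_2(Y) + c$ for every $c$ under our non-equivalence hypothesis, so the point count is $p + O(\sqrt p)$ and the inner product is $O(p^{-1/2})$ --- incompatible with the value $1 + O(p^{-1/2})$ required by the asserted sheaf isomorphism. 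Taking a limit along primes $p \to \infty$ completes the contradiction.

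The main obstacle lies in tracking the possible twists $\chi$ produced by Goursat's lemma: the finite center of $\SL_{d-1}$ contributes $(d-1)$-th-root-of-unity Kummer twists, and one must verify that the rigidification of $\mctF_{f_i}$ built into its definition (via the critical shift $c_{f_i}$ and the Legendre twist $\mcL_2^{d-1}$) restricts the admissible $\chi$ precisely to the class of twists covered by Proposition~\ref{pr-cn-c} --- equivalently, to shifts $c$ and sign choices rather than more exotic multiplicative characters. The symmetric case requires additional care since the odd-polynomial model must be used consistently with the self-dual structure.
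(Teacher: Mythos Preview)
Your overall strategy coincides with the paper's: invoke a Goursat--Kolchin--Ribet criterion, reduce to excluding isomorphisms $\mctF_{f_i}\simeq\mctF_{f_j}\otimes\chi$ (or the dual version), and then derive a contradiction from Proposition~\ref{pr-cn-c} via a point count. However, there is a genuine gap precisely where you flag ``the main obstacle'': you never actually determine~$\chi$, and without this your displayed identity is false. If $\chi$ is an arbitrary rank-one sheaf on~$\Gg_m$ (say a Kummer sheaf for a nontrivial multiplicative character, or something wildly ramified at~$\infty$), the sum $\sum_a \widetilde W_{f_1}(a;p)\overline{\widetilde W_{f_2}(a;p)}\bar\chi(a)$ does \emph{not} reduce to the cardinality of $\{f_1(x)=\pm f_2(y)+c\}$; that reduction requires $\chi$ to already be of the form $\mcL_{\psi(cX)}$ (and in fact trivial). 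So Proposition~\ref{pr-cn-c} cannot be applied until $\chi$ has been pinned down.

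The paper fills this gap by a local-monodromy analysis, which is really the heart of the proof. At~$0$, the sheaves $\mctF_{f_i}$ are tame with local monodromy the sum of all nontrivial characters of order~$d$; these are permuted by multiplication by the monodromy character of~$\chi$ at~$0$, which forces $\chi$ to be unramified at~$0$. At~$\infty$, the wild part of $\mctF_{f_i}$ is $\bigoplus_{v\in V_i}\mcL_{\psi(vX)}$ with $V_i$ the (normalized) critical values; matching these forces $\chi\simeq\mcL_{\psi(cX)}$ on wild inertia, and then $V_i=V_j+c$ together with $\sum V_i=\sum V_j=0$ gives $c=0$. Hence $\chi$ is lisse on~$\Gg_m$, tame at both $0$ and~$\infty$, and unramified at~$0$, so trivial. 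Only then does the diophantine irreducibility criterion and Proposition~\ref{pr-cn-c} apply. A minor correction: the limit in that criterion is over the extensions $\Ff_{p^\nu}$ with $\nu\to\infty$ at the \emph{fixed} prime~$p$, not $p\to\infty$ as you wrote; the proposition is a statement about a single prime.
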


\begin{proof}
  We write $d_i=\deg(f_i)$ and $\mctF_i=\mctF_{f_i}$.
  We also denote by $\mctF_i^{\vee}$ the dual of $\mctF_i$.
  \par
  We will apply the Goursat--Kolchin--Ribet Criterion, as developed by
  Katz~\cite[Prop. 1.8.2]{katz-esde}, and expounded by Fouvry,
  Kowalski and Michel~\cite[Lemma 2.4]{sop}. In the language of
  loc. cit., it suffices to check that the family $(\mctF_{i})$ is
  $\Gg_m$-generous (\cite[Def. 2.1]{sop}), since the individual
  geometric monodromy groups of~$\mctF_i$ are connected by
  Theorem~\ref{th-connect}.
  \par
  This desired property is the combination of four conditions.
  Condition (1) holds because the sheaves $\mctF_{i}$ are pure of
  weight $0$ on $\Gg_m$, and have a geometric monodromy group (namely
  $\SL_{d_i-1}$ or $\Sp_{d_i-1}$ by Theorem~\ref{th-connect}) that
  acts irreducibly on $\bQl^{d_i-1}$.  Conditions (2) and (3) are then
  known properties of $\SL_{d_i-1}$ and $\Sp_{d_i-1}$ (see~\cite[\S
  3.1]{sop}).
  \par
  To prove the most important Condition (4), it is enough to check that if
  $i\not=j$, there is no geometric isomorphism
  \begin{equation}\label{eq-putative}
    \mctF_i\simeq \mctF_j\otimes \mcL,\quad \text{or} \quad
    \mctF_i^{\vee}\simeq \mctF_j\otimes \mcL
  \end{equation}
  where $\mcL$ is a rank one sheaf lisse on $\Gg_m$ (see~\cite[Remark
  2.2]{sop}). This is impossible unless $d_i=d_j$ and unless either none
  or both of~$f_i$ and~$f_j$ are symmetric Sidon--Morse.
  We now assume that $d_i=d_j$ and we denote by $d$ this common value.
  \par
  \textbf{Case (1)}.  Assume first that neither $f_i$ nor $f_j$ is
  symmetric, and that we have the isomorphism
  $ \mctF_i\simeq \mctF_j\otimes \mcL$ in~(\ref{eq-putative}). We denote
  by $c_i$ and $c_j$ the critical shifts of~$f_i$ and~$f_j$.
  \par
  We recall that since $p>2d-1$, the sheaf $\mctF_i$ is, for all~$i$,
  tamely ramified at~$0$ (\cite[Lemma 7.10.4, (1)]{katz-esde}), with
  local monodromy isomorphic to the sum of the non-trivial characters of
  order $d$ (\cite[Lemma 7.10.4, (1)]{katz-esde}). These must be
  permuted by multiplication by the monodromy character $\chi_0$ of
  $\mcL$ at~$0$, which is only possible if $\chi_0=1$, i.e., if $\mcL$
  is lisse at~$0$.
  \par
  Next, by~\cite[Th. 7.8.4, (2)]{katz-esde} and the construction of
  $\mctF_i$, the wild monodromy representation of $\mctF_i$ at
  $\infty$ is the direct sum
  \begin{equation}\label{eq-local-infinity}
    \bigoplus_{v\in V_i}\mcL_{\psi(vX)}
  \end{equation}
  where $V_i$ is the set of critical values of $f_i+c_i$, and
  $\mcL_{\psi(vX)}$ denotes the Artin--Schreier sheaf modulo~$p$ with
  trace function $a\mapsto e(av/p)$. Let $v\in V_i$. The putative
  isomorphism $ \mctF_i\simeq \mctF_j\otimes \mcL$ implies that there
  exists $w\in V_j$ such that
  $$
  \mcL_{\psi(vX)}=\mcL\otimes \mcL_{\psi(wX)},
  $$
  as representations of the wild inertia group at $\infty$. In
  particular, $\mcL$ is an Artin--Schreier sheaf at infinity, say
  $\mcL\simeq \mcL_{\psi(cX)}$ for some $c$, as representations of the
  wild inertia group. The local isomorphism becomes
  $$
  \bigoplus_{v\in V_i}\mcL_{\psi(vX)}\simeq \bigoplus_{w\in
    V_j}\mcL_{\psi((c+w)X)},
  $$
  so that $V_i=V_j+c$ as subsets of $\bFp$. But taking the sum of the
  values on both sides, and using the definition of the normalized sheaf,
  we deduce that $c=0$. Thus the sheaf $\mcL$ is trivial on the wild
  monodromy group, and therefore is also tamely ramified at $\infty$.
  \par
  Since $\mcL$ is lisse on $\Gg_m$ and tame, it is a Kummer sheaf
  attached to some multiplicative character $\chi$ of~$\Ff_p^{\times}$
  (which is its trace function). Since it is lisse at~$0$, this
  character must be trivial. Hence we deduce that $\mctF_i$ and
  $\mctF_j$ are in fact geometrically isomorphic.
  \par
  By the Diophantine Criterion for Irreducibility (see
  e.g.~\cite[Lemma 4.14]{kms}), this implies that
  \begin{equation}\label{eq-irred-diophantine}
    \limsup_{\nu\to+\infty}\frac{1}{p^{\nu}}
    \Bigl|\sum_{a\in\Ff_{p^{\nu}}^{\times}}
    \widetilde{W}_i(a;p^{\nu})\overline{\widetilde{W}_j(a;p^{\nu})}\Bigr|=
    \limsup_{\nu\to+\infty}\frac{1}{p^{\nu}}\sum_{a\in\Ff_{p^{\nu}}^{\times}}
    |\widetilde{W}_i(a;p^{\nu})|^2=1,
  \end{equation}
  where $\widetilde{W}_i(a;p^{\nu})$ is the trace function of
  $\mctF_i$ over the extension of degree $\nu$ of~$\Ff_p$.
  By~(\ref{eq-trace-norm}) and orthogonality of characters, the sum on
  the left-hand side is equal to
  $$
  \frac{1}{p^{\nu}}|\{(x,y)\in\Ff_{p^{\nu}}^2\,\mid\,
  f_i(x)+c_i=f_j(y)+c_j\}|-1
  $$
  (noting that if the trace function of $f_i$ has the Legendre factor,
  then so does $f_j$, and they cancel out). If the polynomial
  $f_i(X)-f_j(Y)+c_i-c_j$ is absolutely irreducible, then we get
  $$
  \frac{1}{p^{\nu}}|\{(x,y)\in\Ff_{p^{\nu}}^2\,\mid\,
  f_i(x)+c_i=f_j(y)+c_j\}|-1
  \ll p^{-\nu/2}
  $$
  by the Riemann Hypothesis for curves, which
  contradicts~(\ref{eq-irred-diophantine}).
  Thus the polynomial
  $$
  f_i(X)-f_j(Y)+c_i-c_j
  $$
  is not absolutely irreducible, which can only happen if~$f_i$
  and~$f_j$ are linearly equivalent over~$\bar{\Ff}_p$
  (Proposition~\ref{pr-cn-c}).
  \par
  \textbf{Case 2.} We continue assuming that neither $f_i$ nor $f_j$ is
  symmetric, and consider the second case of an hypothetical
  isomorphism~(\ref{eq-putative}). It is elementary that the dual
  $\mctF_i^{\vee}$ is the normalized sheaf associated to $-f_i$ (because
  $\mcF_i$ is the Fourier transform of a sheaf that is self-dual, being
  the direct image of the self-dual constant sheaf; see~\cite[Th. 7.3.8,
  (2)]{katz-esde}). Thus we are reduced to the previous case.
  \par
  \textbf{Case 3.} Now we assume that $f_i$ and $f_j$ are
  symmetric. Since $\mctF_i$ and $\mctF_j$ are then self-dual by
  Theorem~\ref{th-connect} (2), we need only exclude the possibility
  of a geometric isomorphism of the form
  $$
  \mctF_i\simeq \mctF_j\otimes \mcL.
  $$
  Assume there is such an isomorphism.  We denote by $g_i$ and $g_j$ the
  odd polynomials associated to~$f_i$ and~$f_j$ so that
  $\mctF_i=\mcF_{g_i}$ and $\mctF_j=\mcF_{g_j}$. Arguing exactly as in
  Case 1, we see that the sheaf~$\mcL$ is trivial. Then continuing again
  as in Case 1 using~(\ref{eq-trace-norm2}) we find that there are
  $\delta_i$ and $\delta_j$ such that
  $$
  f_i(X)-f_j(Y)-\delta_i+\delta_j
  $$
  is not absolutely irreducible, and Proposition~\ref{pr-cn-c} allows us
  to conclude that $f_i$ and $f_j$ would have to be linearly dependent.
\end{proof}

\begin{lemma}\label{lm-lin-eq}
  Let $f$ and $g$ in $\Zz[X]$ be polynomials of common
  degree~$d\geq 3$.  The polynomials $f$ and~$g$ are linearly
  equivalent over~$\bar{\Qq}$ if and only if $f\mods{p}$
  and~$g\mods{p}$ are linearly equivalent over an algebraic closure
  $\bar{\Ff}_p$ of~$\Ff_p$ for infinitely many primes.
\end{lemma}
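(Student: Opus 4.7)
The plan is to split the equivalence into its two implications. For the forward direction, suppose $g(X) = a f(cX+D) + b$ with $a, b, c, D \in \bar{\Qq}$ and $ac \neq 0$. These four scalars lie in $\mathcal{O}_K[1/N]$ for some number field $K$ and some integer $N \geq 1$ (chosen to clear denominators and to invert the norm of $ac$). For every rational prime $p$ coprime to $N$, fix a prime $\mathfrak{p}$ of $\mathcal{O}_K$ above $p$; reducing the identity modulo $\mathfrak{p}$ and embedding the residue field in $\bar{\Ff}_p$ yields a linear equivalence between $f \bmod p$ and $g \bmod p$ over $\bar{\Ff}_p$, and only finitely many primes are excluded.

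For the converse, the idea is to realise linear equivalences as the geometric points of an affine $\Zz$-scheme of finite type and to invoke Chevalley's constructibility theorem. With coordinates $(a, b, c, D)$ on $\Aa^4$, let $V \subset \Aa^4_{\Zz}$ be the open subscheme, of the closed subscheme defined by the $d+1$ polynomial identities obtained by equating coefficients of $X^0, \ldots, X^d$ in the relation $g(X) - a f(cX+D) - b = 0$, cut out by the locus $ac \neq 0$. By construction, for any field $L$ the set $V(L)$ is in bijection with the linear equivalences from $f_L$ to $g_L$ defined over $L$; in particular $V(\bar{\Qq})$ is nonempty if and only if $f$ and $g$ are linearly equivalent over $\bar{\Qq}$, and $V(\bar{\Ff}_p)$ is nonempty if and only if the reductions mod $p$ are linearly equivalent over $\bar{\Ff}_p$.

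Now $V$ is of finite type over $\spec \Zz$, so by Chevalley's theorem the image of the structure morphism $\pi \colon V \to \spec \Zz$ is a constructible subset of $\spec \Zz$. Since constructible subsets of $\spec \Zz$ are either finite subsets of closed points or cofinite (the latter necessarily containing the generic point), we obtain the dichotomy: either $V_{\Qq} = \emptyset$ and $V_{\Ff_p} = \emptyset$ for all but finitely many primes $p$, or $V_{\Qq} \neq \emptyset$ and hence $V(\bar{\Qq}) \neq \emptyset$ since $\bar{\Qq}$ is algebraically closed. The hypothesis of linear equivalence over $\bar{\Ff}_p$ for infinitely many $p$ excludes the first alternative and delivers the desired linear equivalence over $\bar{\Qq}$. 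There is no serious obstacle in the argument; the only point requiring mild care is checking that $V$ genuinely parametrizes linear equivalences over every field, which is clear since the defining conditions are the universal coefficient-matching identities together with the open condition $ac \neq 0$.
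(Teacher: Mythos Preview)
Your proof is correct and follows essentially the same approach as the paper: both set up the scheme of linear equivalences over~$\Zz$ and argue that its $\bar{\Qq}$-points are nonempty if and only if its $\bar{\Ff}_p$-points are nonempty for infinitely many (equivalently, all but finitely many) primes. The paper invokes the Nullstellensatz directly rather than Chevalley's theorem, and it omits the open condition $ac\neq 0$ because the common-degree hypothesis $d\geq 3$ makes it automatic---any tuple $(a,b,c,D)$ satisfying the coefficient identities for $g=af(cX+D)+b$ with $\deg g=d\geq 1$ necessarily has $ac\neq 0$, so the closed subscheme already parametrizes linear equivalences.
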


\begin{proof}
  The set $X_{f,g}$ of tuples $(a,b,c,d)$ in~$\bar{\Qq}$ such that
  $$
  g=af(cX+d)+b
  $$
  is defined by polynomial equations with rational coefficients. The
  polynomials~$f$ and~$g$ are linearly equivalent over~$\bar{\Qq}$ if
  and only if~$X_{f,g}(\bar{\Qq})$ is not empty. Since $X_{f,g}$ is an
  algebraic variety, this is true if and only
  if~$X_{f,g}(\bar{\Ff}_p)$ is not empty for all~$p$ large enough
  (e.g, by the Nullstellensatz: if $X_{f,g}(\bar{Q})$ is empty, then
  there is a representation of~$1$ as belonging to the ideal generated
  by the equations of~$X_{f,g}$, and this leads to a representation
  of~$1$ over~$\bar{\Ff}_p$ for all primes large enough), which proves
  the assertion.
\end{proof}

\begin{corollary}\label{cor-mult}
  Let $m\geq 1$ be an integer and let $f_1$, \ldots, $f_m$ be
  Sidon--Morse polynomials in $\Zz[X]$ that are pairwise not linearly
  equivalent over~$\bar{\Qq}$. Let~$s\leq m$ be the number of $f_i$ such
  that $f_i$ is symmetric Sidon--Morse of degree~$\geq 5$.
  \par
  We have
  \begin{gather}
    \frac{1}{p}\sum_{(a,p)=1}|W_{f_1}(a;p) \cdots W_{f_m}(a;p)|^2=
    1+O(p^{-1/2})\label{eq-1}
    \\
    \frac{1}{p}\sum_{(a,p)=1}|W_{f_1}(a;p) \cdots W_{f_m}(a;p)|^4=
    2^{m-s}3^s+O(p^{-1/2})
    \label{eq-2}
  \end{gather}
  where the implied constant depends only on $m$ and on the degrees of
  the polynomials $f_i$.  
\end{corollary}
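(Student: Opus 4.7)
The plan is to apply the Grothendieck--Lefschetz trace formula using the direct-product monodromy structure supplied by Proposition~\ref{pr-mult}, reducing both moments to products of dimensions of invariant subspaces that can be computed by classical representation theory of $\SL_n$ and $\Sp_n$.

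By Lemma~\ref{lm-lin-eq}, the reductions $f_i \bmod p$ are pairwise not linearly equivalent over $\bFp$ for all but finitely many primes, and each such reduction is Sidon--Morse over $\Ff_p$ for $p$ large enough. Proposition~\ref{pr-mult} then identifies the geometric monodromy group of $\bigoplus_{i=1}^m \mctF_{f_i}$ on $\Gg_m$ as $G^g = \prod_{i=1}^m G_i$, where $G_i = \Sp_{d_i-1}(\bQl)$ when $f_i$ is symmetric Sidon--Morse and $G_i = \SL_{d_i-1}(\bQl)$ otherwise. The normalized trace function $\widetilde{W}_{f_i}$ of $\mctF_{f_i}$ satisfies $|\widetilde{W}_{f_i}(a;p)| = |W_{f_i}(a;p)|$ by~(\ref{eq-trace-norm}) and~(\ref{eq-trace-norm2}), so for any integer $k \geq 1$ the sheaf
$$
\sheaf{H}_k := \bigotimes_{i=1}^m (\mctF_{f_i} \otimes \mctF_{f_i}^\vee)^{\otimes k}
$$
is lisse on $\Gg_m$, pure of weight zero, and has trace function $a \mapsto |W_{f_1}(a;p)\cdots W_{f_m}(a;p)|^{2k}$. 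Applying the Grothendieck--Lefschetz trace formula and Deligne's Riemann Hypothesis (the rank and Swan conductors of $\sheaf{H}_k$ being bounded in terms of $m$ and the $d_i$), we obtain
$$
\frac{1}{p}\sum_{(a,p)=1} |W_{f_1}(a;p)\cdots W_{f_m}(a;p)|^{2k}
= \dim (\sheaf{H}_k)^{G^g} + O(p^{-1/2}),
$$
using that arithmetic Frobenius acts trivially on the geometric invariants: the arithmetic monodromy differs from $G^g$ only by central scalars of absolute value one, and these cancel inside any tensor of a sheaf with its dual.

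The direct-product structure of $G^g$ gives
$$
\dim(\sheaf{H}_k)^{G^g} = \prod_{i=1}^m \dim \bigl((V_i \otimes V_i^\vee)^{\otimes k}\bigr)^{G_i},
$$
where $V_i$ denotes the standard representation of $G_i$. For $k=1$, Schur's lemma yields a factor of $1$ from each $i$, proving~(\ref{eq-1}). For $k=2$, the space $\End(V_i \otimes V_i)^{G_i}$ is evaluated via the decomposition $V_i \otimes V_i = \syms V_i \oplus \wedge^2 V_i$: when $G_i = \SL_{d_i-1}$ (which includes $\SL_2 = \Sp_2$ arising whenever $d_i = 3$, symmetric or not), both summands are inequivalent irreducible representations, yielding dimension $2$; when $G_i = \Sp_{d_i-1}$ with $d_i \geq 5$, the space $\wedge^2 V_i$ splits further as $\bQl \cdot \omega \oplus \wedge^2_0 V_i$ (with $\omega$ the symplectic form and $\wedge^2_0 V_i$ irreducible), yielding dimension $3$. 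Since $s$ counts exactly the second case, the product equals $2^{m-s} \cdot 3^s$, establishing~(\ref{eq-2}).

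The main obstacle is the identification of the joint geometric monodromy as a direct product, which is the content of Proposition~\ref{pr-mult} and rests on the Goursat--Kolchin--Ribet criterion combined with Theorem~\ref{th-connect}. Once this is granted, the arithmetic-Frobenius triviality on invariants and the representation-theoretic dimension calculations above are standard.
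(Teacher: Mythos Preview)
Your proposal is correct and follows essentially the same route as the paper: reduce to large primes via Lemma~\ref{lm-lin-eq}, invoke Proposition~\ref{pr-mult} to identify the joint geometric monodromy as a product, and then compute the invariant dimensions factor by factor. The only substantive difference is cosmetic: for the fourth moment the paper appeals to the Larsen Alternative~\cite[Th.\,1.1.6]{katz-larsen} to get the values~$2$ and~$3$, whereas you obtain them directly from the decomposition $V_i\otimes V_i=\syms V_i\oplus\wedge^2 V_i$ (noting that $\wedge^2 V_i$ splits off a trivial summand precisely in the symplectic case with $d_i\ge 5$), which is in fact the underlying computation behind that citation.
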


\begin{proof}
  Applying Lemma~\ref{lm-lin-eq}, we see that for $p$ large enough,
  the assumptions of Proposition~\ref{pr-mult} hold modulo~$p$.
  Let~$p$ be such a prime. Using the same notation as
  in~(\ref{eq-rh}), the left-hand side of~(\ref{eq-1}) is equal to
  $$
  \iota(\Tr(f_p|\End(W_p)^G))+O(p^{-1/2})
  $$
  where $W_p$ is the tensor product space
  $$
  \bigotimes_{i} \bQl^{d_i-1}
  $$
  as a representation of the geometric monodromy group~$G$ of
  $$
  \bigoplus_{i} \mctF_i.
  $$
  By Proposition~\ref{pr-mult}, this representation can be identified
  with the external tensor product of the representations of the
  individual geometric monodromy groups; since this external tensor
  produt is an irreducible representation (see,
  e.g.,~\cite[Prop. 2.3.23]{repr}), the invariant space has dimension
  one, spanned by the scalar matrices, on which $f_p$ acts trivially,
  and the first result follows.
  \par
  For the second result, we get in the same way the main term
  of~(\ref{eq-2}) equal to
  $$
  \prod_{i=1}^m\dim (\End(\End(\bQl^{d_i-1}))^{G_i}
  $$
  where $G_i$ is the geometric monodromy group of~$\mctF_i$. By the
  simplest case of the Larsen Alternative
  (see~\cite[Th.\,1.1.6]{katz-larsen}), each factor is equal to~$3$ if
  $f_i$ is a symmetric Sidon--Morse polynomial of degree~$\geq 5$ (with
  symplectic monodromy) and to~$2$ for the others.
\end{proof}

We conclude this section with the following proposition will is used
in the proof of the first part of Theorem~\ref{th-main2}, where only
one polynomial is assumed to be a Sidon--Morse polynomial.

\begin{proposition}\label{pr-noncorr}
  Let~$f$ and $g$ be non-constant polynomials in~$\Zz[X]$ of degrees
  $d_f$ and~$d_g$ respectively. Suppose that $f$ is a Sidon--Morse
  polynomial, that $d_f<d_g$ and that $g$ is absolutely
  irreducible. Then
  $$
  \frac{1}{p}\sum_{(a,p)=1}|W_f(a;p)\overline{W_g(a;p)}|^2= 1+O(p^{-1/2})
  $$
  where the implied constant depends only on~$d_f$ and~$d_g$.
\end{proposition}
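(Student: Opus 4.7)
The plan is to adapt the argument of Proposition \ref{pr-mult} to the asymmetric setting in which only $f$ is assumed to be Sidon--Morse. First I would introduce the normalized Fourier sheaf $\mctF_f$ from Section \ref{sec-generic} (for which $|\widetilde W_f(a;p)|=|W_f(a;p)|$ by~\eqref{eq-trace-norm}--\eqref{eq-trace-norm2}) and the Fourier sheaf $\mcF_g$ attached to $g$ as in Section \ref{sec-katz}, so that the sum on the left-hand side is the normalized average of the trace function of the sheaf $\End(\mctF_f)\otimes \End(\mcF_g)$ on $\Gg_m$. By the Grothendieck--Lefschetz trace formula combined with Deligne's Riemann Hypothesis (as in~\eqref{eq-rh}), for $p$ large enough this average equals $\iota(\Tr(f_p\mid W_p))+O(p^{-1/2})$, where $W_p=(\End(V_f)\otimes \End(V_g))^{G^g}$ and $G^g$ denotes the geometric monodromy group of $\mctF_f\oplus \mcF_g$. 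The vector $\mathrm{id}_{V_f}\otimes \mathrm{id}_{V_g}$ lies in $W_p$ and is Frobenius-fixed, since both sheaves are pure of weight $0$, so it will suffice to prove that $\dim W_p=1$.

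By Theorem \ref{th-connect}, the geometric monodromy of $\mctF_f$ is $\SL_{d_f-1}(\bQl)$ or $\Sp_{d_f-1}(\bQl)$, in particular connected and irreducible on $V_f$. The hypothesis that $g$ is absolutely irreducible, understood in the sense of Lemma \ref{lem3.2} that $(g(X)-g(Y))/(X-Y)$ is absolutely irreducible over $\Qq$, combined with Lemma \ref{lm-geom-irred}, ensures that $\mcF_g$ is geometrically irreducible for all but finitely many $p$. Invoking~\cite[Lemma 7.7.5]{katz-esde} (as in Step 1 of the proof of Proposition \ref{pr-motivic}), one concludes for $p>d_g$ that the connected component $G_{g,0}^g$ of the monodromy already acts irreducibly on $V_g$.

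Next I would apply the Goursat--Kolchin--Ribet criterion of~\cite[Lemma 2.4]{sop} to $(\mctF_f,\mcF_g)$, following the template of the proof of Proposition \ref{pr-mult}. The generosity conditions for each sheaf are provided, for $\mctF_f$, by Theorem \ref{th-connect}, and, for $\mcF_g$, by the irreducibility of the action of $G_{g,0}^g$. The crucial non-trivial condition is the absence of any geometric isomorphism of the form $\mctF_f\simeq \mcF_g\otimes \mcL$ or $\mctF_f^{\vee}\simeq \mcF_g\otimes \mcL$ for a rank-one lisse sheaf $\mcL$ on $\Gg_m$, and this is immediate from the rank inequality $d_f-1<d_g-1$. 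It follows that the geometric monodromy of $\mctF_f\oplus \mcF_g$ contains the product of the two individual geometric monodromy groups (up to central factors), so that the invariants factor as $\End(V_f)^{G_f^g}\otimes \End(V_g)^{G_{g,0}^g}$; Schur's lemma applied to each irreducible representation reduces each factor to the one-dimensional space of scalar matrices, forcing $\dim W_p=1$.

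The hardest part is to check that the Goursat--Kolchin--Ribet formalism of~\cite[Lemma 2.4]{sop} still applies despite having only irreducibility (rather than an explicit identification of the geometric monodromy) for $\mcF_g$; once the rank mismatch is used to exclude twist-equivalences with $\mctF_f$ and its dual, all remaining ingredients mirror those already employed in the proof of Proposition \ref{pr-mult}.
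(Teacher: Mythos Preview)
Your strategy has a genuine gap at the invocation of the Goursat--Kolchin--Ribet criterion of~\cite[Lemma~2.4]{sop}. That criterion requires, beyond irreducibility, the structural Conditions~(2) and~(3) referred to in the proof of Proposition~\ref{pr-mult}; as the paper indicates there, these are specific properties of $\SL_{d_i-1}$ and $\Sp_{d_i-1}$ (essentially, that the Lie algebra of the identity component be simple, so that the group is ``Goursat-adapted'' in the sense of~\cite{katz-esde}). For $\mctF_f$ these are supplied by Theorem~\ref{th-connect}, but for $\mcF_g$ you only know geometric irreducibility of the connected component, which does not imply simplicity of its Lie algebra: $G_{g,0}^g$ could, for instance, be a product of several simple factors acting irreducibly through a tensor product, and then the packaged criterion does not apply. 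Your own final paragraph flags exactly this step as ``the hardest part'', and it cannot be completed with the information available about $g$.

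The paper circumvents this by a bare-hands Goursat argument that exploits only the known structure of $G_f$. Writing $H$ for the geometric monodromy of $\mctF_f\oplus\mcF_g$ with its two surjective projections, one looks at the kernel $L\lhd G_f$ of the induced map $G_f\to G_g$; since $G_f\in\{\SL_{d_f-1},\Sp_{d_f-1}\}$, this kernel is either all of $G_f$ (whence $H=G_f\times G_g$ and Schur's Lemma finishes as you indicate) or finite. The finite case is excluded because it would yield a faithful representation of the Lie algebra of $G_f$ in dimension $d_g-1$, contradicting the fact that the smallest such representation has dimension $d_f-1$. Note that this last step requires $d_f>d_g$, which is the hypothesis in Theorem~\ref{th-main2}~(1) and the one actually used in the paper's proof; the inequality $d_f<d_g$ in the statement you were handed is a typo. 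Under $d_f<d_g$ your rank mismatch still rules out twist-isomorphisms, but nothing then prevents $G_f$ from embedding into $G_g$, so neither argument would conclude.
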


\begin{proof}
  This is a variant of the Goursat--Kolchin--Ribet argument, but where
  we only fully control one of the sheaves.
  \par
  Let $p>d_f-1$ be a prime such that $f$ is a Sidon--Morse polynomial
  modulo~$p$. We denote by $\mctF_f$ the normalized sheaf associated
  to~$f$ modulo~$p$, and by~$G_{f}$ (resp. $G_{g}$) the geometric
  monodromy group of $\mctF$ (resp.~ of~$\mcF_g$).  Since $f$ is a
  Sidon--Morse polynomial, we have $G_f=\SL_{d_f-1}$ or
  $G_f=\Sp_{d_f-1}$ (the latter when $f$ is symmetric Sidon--Morse) by
  Theorem~\ref{th-connect}.
  \par
  Let further $H$ be the geometric monodromy group of
  $\mctF_f\oplus\mathcal{F}_g$.  We have a natural inclusion
  $H\to G_{f}\times G_{g}$, and the composition of this inclusion with
  either projection is surjective.
  \par
  We denote by $W_p$ the space
  $$
  \End(\mctF_f\otimes \mathcal{F}_g)^{H},
  $$
  and by~$f_p$ a representative of the Frobenius automorphism in~$H$.
  The analogue of~(\ref{eq-rh}) in this case is the formula
  $$
  \frac{1}{p}\sum_{a\in\Ff_p^{\times}} |W_f(a;p)\overline{W_g(a;p)}|^2
  =\iota(\Tr(f_p|W_p))+O(p^{-1/2})
  $$
  where the implied constant depends only on~$d_f$ and~$d_g$ (and we used
  the fact that the trace function of $\mctF_f$ has the same modulus as
  that of $\mcF_f$).  By Schur's Lemma, it then suffices to prove that the
  representation of~$H$ on $\mctF_f\otimes \mathcal{F}_g$ is
  irreducible, and in turn it is enough to prove that $H=G_f\times G_g$
  (using again 
  the irreducibility of external tensor product of irreducible
  representations, see~\cite[Prop. 2.3.23]{repr}).
  \par
  We denote by $L$ the kernel of the composition homomorphism
  $$
  G_{f}\to H\subset G_f\times G_g\to G_{g}.
  $$
  This is a normal subgroup of $G_{f}$, hence $L$ is either finite or
  equal to $G_f$.  If the latter holds, then $H$ contains
  $G_f\times \{1\}$, and it follows easily that $H=G_f\times G_f$.
  \par
  Thus we need to exclude the possibility that $L$ is finite. However,
  if that is the case, then $G_{f}/L$ is isomorphic to a subgroup of
  $G_{g}$, hence the Lie algebra of~$G_f$ has a faithful
  representation of dimension~$\leq d_g-1$. Since we assumed that
  $d_f>d_g$, this is impossible in view of the minimal dimensions of
  faithful representations of the Lie algebras of~$\SL_{d_f-1}$
  or~$\Sp_{d_f-1}$ (which are equal to~$d_f-1$, see
  e.g.~\cite[p.\,249, Exercice 2 et p.\,214, Table\, 2]{lie8}).
\end{proof}

\begin{remark}\label{rm-35}
  Theorem~\ref{th-connect} also implies
  Proposition~\ref{prop3.5}. Indeed, using the same notation as
  in~(\ref{eq-rh}), the Riemann Hypothesis and conductor estimates
  imply that for~$k$ fixed and~$p$ large, we have
  $$
  \sum_{a\in\Ff_p^{\times}}|W(a;p)|^{2k}=
  \nu_k+O(p^{-1/2}),
  $$
  where~$\nu_k$ is the multiplicity of the trivial representation of
  the geometric monodromy group in the
  representation~$\End(\bQl^{d-1})^{\otimes k}$. By character theory
  for compact groups, we have
  $$
  \nu_k=\int_{K_d}|\Tr(g)|^{2k}d\mu(g)
  $$
  for a maximal compact subgroup~$K_d$ of the geometric monodromy
  group, where $\mu$ is the Haar measure on $K_d$ normalized to have
  total volume $1$. We can take $K_d=\SU_{d-1}(\Cc)$ if the geometric
  monodromy group is~$\SL_{d-1}$, and $K_d=\USp_{d-1}(\Cc)$ if it
  is~$\Sp_{d-1}$. 
\end{remark}

\section{Multiple correlations}\label{sec-mult}

We now come to Theorem~\ref{th-main2}.  For the first part, we apply
Theorem~\ref{thm5} to the function
$a\mapsto |W_f(a;q)\overline{W_g(a;q)}|^2$. We can take
$M=(d_f-1)^2(d_g-1)^2$.  By Proposition~\ref{pr-noncorr}, we have
$$
\frac{1}{p}\sum_{(a,p)=1}|W_f(a;p)\overline{W_g(a;p)}|^2= 1+O(p^{-1/2})
$$
so we can take~$g(p)=1+O(p^{-1/2})$. Thus Theorem~\ref{thm5} gives, for
some constant~$C\geq 0$, the bound
\begin{align*}
  \sum_{q\leq x}|W_f(a;q)W_g(a;q)|^2
  &\ll \frac{x}{\log x}\prod_{p\leq
    x}\Bigl(1+\frac{1}{p}+\frac{C}{p^{3/2}}\Bigr)
  (\log \log x)^{(d_f-1)^2(d_g-1)^2} 
  \\
  &\ll x(\log \log x)^{(d_f-1)^2(d_g-1)^2}.
\end{align*}

For the second part, we apply Theorem~\ref{thm5} to the functions
\begin{gather*}
  a\mapsto |W_1(a;q)\cdots W_m(a;q)|,\\
  a\mapsto |W_1(a;q)\cdots W_m(a;q)|^2,\\
  a\mapsto |W_1(a;q)\cdots W_m(a;q)|^4
\end{gather*}
and argue as in the proof of Theorem~\ref{th-main} using
Corollary~\ref{cor-mult}.



\section{Remarks on Katz's Theorem}
\label{sec-remark}

We want to observe that Katz's Theorem (Theorem~\ref{th-connect}) can be
explained, in the case of monodromy $\SL_{d-1}$, as the combination of
two facts:
\begin{enumerate}
\item the local monodromy computation~(\ref{eq-local-infinity}), which
  has an intuitive meaning as the algebraic analogue of the stationary
  phase expansion for oscillatory integrals
  $$
  g(t)=\int e^{itf(x)}dx,
  $$
\item a result of Gabber (see~\cite[Th.~1.0]{katz-esde}) which
  (essentially) deduces the nature of the monodromy group from the Sidon
  property of the critical values.
\end{enumerate}

Since the proof of Gabber's result, in this special case, is relatively
accessible and (in our opinion) quite enlightening with respect to the
relevance of the Sidon condition, we include the precise statement and
its proof.

\begin{proposition}\label{pr-gabber}
  Let $V$ be a finite-dimensional complex vector space of dimension
  $r\geq 1$, and let $G$ be a connected semisimple compact subgroup of
  $\GL(V)$ which acts irreducibly on~$V$. Let $D$ be the subgroup of
  elements of~$\GL(V)$ which are diagonal with respect to some basis,
  and let $\chi_i$, for $1\leq i\leq r$, be the characters
  $D\to \Cc^{\times}$ giving the coefficients of the elements of~$D$.
  \par
  Let $A\subset D$ be a subgroup of the normalizer of $G$ in
  $\GL(V)$. Let $S\subset \widehat{A}$ be the subset of the group of
  characters of~$A$ given by the restrictions to~$A$ of the diagonal
  characters $\chi_i$. If $|S|=r$ and $S$ is a Sidon set in
  $\widehat{A}$, then $G=\SU(V)$.
\end{proposition}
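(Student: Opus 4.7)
The plan is to work at the level of the complexified Lie algebra $\mathfrak{g}_{\Cc} := \mathrm{Lie}(G)\otimes \Cc \subset \mathfrak{gl}(V)$, exploiting its decomposition into $A$-weight spaces under the adjoint action, which is well-defined because $A$ normalizes $G$. Fix the basis $e_1,\dots,e_r$ of $V$ that diagonalizes $D$ and write $E_{ij}$ for the associated matrix units. The weights of $A$ on $\mathfrak{gl}(V)$ are the characters $(\chi_i \chi_j^{-1})|_A$, each with one-dimensional weight space $\Cc E_{ij}$. The hypothesis $|S|=r$ says the $\chi_i|_A$ are pairwise distinct, and the Sidon property of $S$ then forces all nontrivial weights $(\chi_i\chi_j^{-1})|_A$ (for $i\neq j$) to be pairwise distinct. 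Since $\mathfrak{g}_{\Cc}$ is semisimple it equals its derived algebra, and so is contained in $\mathfrak{sl}(V)$. Combining these observations yields a decomposition
\[
\mathfrak{g}_{\Cc} \;=\; \mathfrak{g}_0 \;\oplus\; \bigoplus_{(i,j)\in R} \Cc E_{ij}
\]
for some $R \subseteq \{(i,j):i\neq j\}$, where the zero-weight part $\mathfrak{g}_0 = \mathfrak{g}_{\Cc}^A$ sits inside the traceless diagonal subalgebra $\mathfrak{d}_0$.

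The heart of the argument is to show that $R$ is symmetric, $(i,j)\in R \Longleftrightarrow (j,i)\in R$. I propose using the symmetric bilinear form $B(X,Y)=\Tr(XY)$ on $\mathfrak{gl}(V)$. On the one hand, $B$ is $\Ad$-invariant, so for $X\in \mathfrak{g}_\alpha$ and $Y\in \mathfrak{g}_\beta$ we get $B(X,Y)=\alpha(a)\beta(a)B(X,Y)$ for every $a\in A$, which forces $B(X,Y)=0$ unless $\alpha\beta = 1$ in $\widehat{A}$. On the other hand, since $\mathfrak{g}_{\Cc}$ is semisimple and $V$ is irreducible, a standard structure theorem expresses $V$ as a tensor product $\bigotimes V_i$ of irreducibles of the simple ideals $\mathfrak{g}_i$ of $\mathfrak{g}_{\Cc}$, each $V_i$ nontrivial (because the embedding $\mathfrak{g}_{\Cc}\hookrightarrow \mathfrak{gl}(V)$ is faithful and simple Lie algebras are perfect). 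A nontrivial representation of a simple Lie algebra has nonzero trace form, which being invariant is a nonzero scalar multiple of the Killing form and hence non-degenerate. Combining these facts, $B|_{\mathfrak{g}_{\Cc}}$ is non-degenerate, so $\mathfrak{g}_\alpha \neq 0$ implies $\mathfrak{g}_{-\alpha}\neq 0$, giving the desired symmetry of $R$.

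To finish, the bracket identity $[E_{ij},E_{jk}]=E_{ik}$ (for $i\neq k$) shows that $R$ is transitive, and combined with the symmetry just proved, the relation ``$i=j$ or $(i,j)\in R$'' is an equivalence relation on $\{1,\dots,r\}$. The span of each equivalence class is $\mathfrak{g}_{\Cc}$-stable (since $\mathfrak{g}_0\subset \mathfrak{d}$ acts diagonally while $E_{ij}$ sends $e_j$ to $e_i$), so the irreducibility of $V$ forces a single class, hence $R=\{(i,j):i\neq j\}$. The brackets $[E_{ij},E_{ji}]=E_{ii}-E_{jj}$ then fill $\mathfrak{g}_0$ out to all of $\mathfrak{d}_0$, yielding $\mathfrak{g}_{\Cc}=\mathfrak{sl}(V)$. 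Since $G$ is connected and compact, sitting inside $\Un(V)$ for a $G$-invariant Hermitian form, with complexified Lie algebra $\mathfrak{sl}(V)$, we conclude $G=\SU(V)$.

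The main obstacle is the non-degeneracy of the trace form on $\mathfrak{g}_{\Cc}$, which is precisely what pins down the symmetry of $R$. A more naive approach using an $A$-invariant Hermitian structure would fail because $A$ is not assumed to be unitary; by contrast the $\Ad$-invariant trace form on $\mathfrak{gl}(V)$ is available uniformly, and its non-degeneracy is where the semisimplicity of $\mathfrak{g}_{\Cc}$ and the irreducibility of $V$ enter in a decisive way.
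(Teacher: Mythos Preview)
Your argument is correct, and it diverges from the paper's proof after the common first step. Both proofs begin by using the Sidon hypothesis to decompose $\mathfrak{g}_{\Cc}$ as $H\oplus\bigoplus_{(i,j)\in R}\Cc E_{ij}$ with $H$ diagonal. From there the paper takes a structural shortcut: since $\mathfrak{g}_{\Cc}$ is spanned by $D$-weight vectors it is stable under the full diagonal torus $D$, giving an injection $D/Z\hookrightarrow \Aut(\mathfrak{g}_{\Cc})^{0}$, which is the adjoint group of $G$; hence $G$ has rank $r-1$, and one concludes by the Borel--de Siebenthal theorem that a connected semisimple subgroup of $\SU(V)$ of maximal rank must be $\SU(V)$ itself. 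Your route is instead a direct computation inside $\mathfrak{gl}(V)$: non-degeneracy of the trace form on $\mathfrak{g}_{\Cc}$ forces $R$ to be symmetric, the bracket relation $[E_{ij},E_{jk}]=E_{ik}$ gives transitivity, and irreducibility of $V$ collapses the resulting equivalence relation to a single class, yielding $\mathfrak{g}_{\Cc}=\mathfrak{sl}(V)$ outright. The paper's argument is shorter and more conceptual but leans on a nontrivial classification-type statement; yours is longer but entirely elementary, requiring only the external tensor product decomposition of $V$ over the simple ideals and the standard fact that the trace form of a faithful representation of a simple Lie algebra is a nonzero multiple of the Killing form.
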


\begin{proof}
  We denote by $Z\subset D$ the subgroup of scalar matrices. We may
  assume that $G\subset \Un(V)$.
  
  The group~$G$ is a compact real Lie group.  We consider the
  representation of~$A$ on $\End(V)$ by conjugation. It acts on the
  elementary matrices $E_{i,j}$ by $\chi_i\chi_j^{-1}$. The assumption
  that~$S$ has $r$ elements and is a Sidon set means then that
  $$
  \End(V)=\bigoplus_{i,j} \Cc E_{i,j}
  $$
  is a decomposition of the representation as a sum of characters where,
  for $i\not=j$, the line $\Cc E_{i,j}$ is a non-trivial character of
  multiplicity one.
  \par
  Since $A\subset N_{\GL(V)}(G)$, the complexified Lie algebra
  $L\subset \End(V)$ of $G$ is a subrepresentation of the representation
  of~$A$ on~$\End(V)$.
  Thus there exists a subspace~$H$ of the diagonal matrices, and a
  subset~$X$ of pairs $(i,j)$ of distinct integers such that
  $$
  L=H\oplus\bigoplus_{(i,j)\in X}\Cc E_{i,j}.
  $$
  This implies that~$L$ is in fact stable under conjugation by all
  of~$D$. We have therefore an induced morphism
  $$
  D\to \Aut(L),
  $$
  which induces an injective morphism $D/Z\to \Aut(L)$. Its image is
  contained in the neutral component of~$\Aut(L)$. Since~$L$ is
  semisimple, the latter is equal to the adjoint group of~$G$ (see,
  e.g.,~\cite[p.\,244, Prop.\,30, (ii)]{lie6}). It follows that the
  connected semisimple group~$G\subset \SU(V)$ has rank~$r-1$; it
  follows that $G=\SU(V)$ (e.g., by the Borel--de Siebenthal Theorem:
  the group $G$ coincides with the connected component of the identity
  of the centralizer in $\SU(V)$ of the center of~$G$, for instance
  by~\cite[p.\,36, prop.\,13]{lie9}, and the center is contained in the
  group of scalar matrices by Schur's Lemma, so its centralizer is
  $\SU(V)$).
\end{proof}

This proposition is applied to a conjugate of the \emph{finite}
subgroup~$A$ of elements of the form
$$
\mathrm{diag}(e(xv_1/p),\ldots, e(xv_{d-1}/p))
$$
where $(v_1,\ldots, v_{d-1})$ are the critical values of~$f$; indeed,
the local monodromy computation implies that such a subgroup is
contained in a maximal compact subgroup of the monodromy group.

\end{document}